\newtheorem{thm}{Theorem}[section]
\newtheorem{cor}[thm]{Corollary}
\newtheorem{prop}[thm]{Proposition}
\theoremstyle{definition}
\theoremstyle{remark}
\numberwithin{equation}{section}
\newcommand{\R}{\mathbb R}
\newcommand{\eps}{\epsilon}
\newcommand{\Om}{\Omega}
\newcommand{\ov}{\overline}
\newcommand{\p}{\partial}
\newcommand{\f}{\varphi}
\newcommand{\comment}[1]{}
\begin{document}

\title[Stable cones]{ Some remarks on stability of cones for the one-phase free boundary problem}

\author{David Jerison}
\address{Department of Mathematics, 77 Massachusetts Ave, Cambridge, MA  02139-4307}\email{\tt jerison@math.mit.edu}
\author{Ovidiu Savin}
\address{Department of Mathematics, Columbia University, New York, NY 10027}\email{\tt  savin@math.columbia.edu}

\thanks{D. J. was supported by the Bergman Trust
and an NSF grant DMS-1069225.}
\thanks{ O.~S.~ was supported by NSF grant DMS-1200701.}

\begin{abstract}
We show that stable cones for the one-phase free boundary problem are hyperplanes
in dimension $4$. As a corollary, both one and two-phase energy minimizing 
hypersurfaces are smooth in dimension $4$.
\end{abstract}
\maketitle

\section{Introduction}

We investigate stable homogeneous solutions $$u: \ov \Omega \to \R, \quad \quad \Om \subset \R^n,$$ to the one-phase free boundary problem
\begin{equation}\label{op}
\triangle u=0 \quad \mbox{in $\Om$}, \quad \quad u=0 
\ \mbox{ and } \ |\nabla u|=1 \quad \mbox{on $\p \Om \setminus \{0\}$}.
\end{equation}
Here $u$ is a homogeneous of degree one function which is positive in $\Omega$, and $\Omega$ is a conical domain ($\Om = r\Om$ for all $r>0$) with smooth cross-section.

We are interested in solutions $u$ that 
are stable with respect to the Alt-Caffarelli (see \cite{AC}) energy functional,
\begin{equation}\label{Eu}
E(u,B)=\int_B |\nabla u|^2 + \chi_{\{u>0\}} \, dx ,
\end{equation} 
with respect to compact domain deformations that do not contain the origin.  
Explicitly, the 
stability we require is that for any smooth vector field $\Psi:\R^n
\to \R^n$ with $0 \notin \mbox{supp} \, \, \Psi \subset B_R$ 
we have
\begin{equation} \label{stab}
\frac{d^2}{d t^2} \, \, 
E \left (u(x+ t \Psi(x)), B_R \right ) \ge 0 \quad \quad \mbox{at $t=0$}.
\end{equation}

There is a vast literature concerning the one-phase free boundary problem;
see, for example, the book by Caffarelli and Salsa \cite{CS}. Many results in 
the regularity theory of the free boundary $\p \{u>0\}$ parallel the 
corresponding statements in the regularity theory of minimal surfaces, 
see \cite{C1,C2,DJ2,W}.

Our main result is the following.

\begin{thm}\label{d4}
The only stable homogeneous solutions in dimension $n \le 4$ are the one-dimensional solutions $u=(x \cdot \nu)^+$ for unit vectors $\nu$. 
\end{thm}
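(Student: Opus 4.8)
The plan is to convert the stability condition \eqref{stab} into a quantitative inequality on the free boundary $F=\p\Om\setminus\{0\}$, and then to exploit the degree-one homogeneity of $u$ together with a dimension-dependent integral identity to force the free boundary to be flat.

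\textbf{Second variation.} First I would compute the second variation explicitly. Writing $\Phi_t(x)=x+t\Psi(x)$ and $v_t=u\circ\Phi_t$, I differentiate $E(v_t,B_R)$ twice at $t=0$; using $\triangle u=0$ in $\Om$ and $|\nabla u|=1$ on $F$ to kill the first–order terms, the result reduces to a boundary functional depending only on the normal displacement $\f=\Psi\cdot\nu$ on $F$. Here $\nu$ is the outer unit normal, so that $\nabla u=-\nu$ and $|\nabla u|=1$ on $F$. Minimizing over all $\Psi$ with prescribed trace $\f$ (possible since $\nabla u$ is nonzero near $F$ and a.e.\ in $\Om$), stability becomes
\[
\int_F H\,\f^2\,d\sigma\ \le\ \int_\Om|\nabla w|^2\,dx,
\]
for every $\f$ compactly supported away from the origin, where $w$ is the harmonic extension of $\f$ to $\Om$ and $H=u_{\nu\nu}$ is the mean curvature of $F$; equivalently the Dirichlet–to–Neumann operator of $\Om$ dominates multiplication by $H$. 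Differentiating the boundary conditions gives $D^2u\,\nu=H\nu$ on $F$, so along $F$ the eigenvalues of $D^2u$ are $H$ (eigenvector $\nu$) and $-\kappa_1,\dots,-\kappa_{n-1}$ (principal curvatures), with $\sum_i\kappa_i=H$ and $|D^2u|^2=H^2+|A|^2$.

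\textbf{The kernel of the stability form.} Next I would locate the equality cases. Constant vector fields $\Psi\equiv e$ generate translations and leave $E$ unchanged, hence saturate the inequality. Their normal trace is $\f=e\cdot\nu$, and the corresponding harmonic extension is exactly the degree-zero function $w=-\p_e u$, which has the correct boundary values. Summing over an orthonormal basis yields an identity relating $\int_\Om|D^2u|^2\,dx$ and $\int_F H\,d\sigma$ (on truncated cones, the spherical caps being controlled by homogeneity). The point is that $\{e\cdot\nu\}$ lies in the kernel of the stability form, so to detect instability one must \emph{beat} this kernel.

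\textbf{Homogeneity and the link.} Because $u$ is homogeneous of degree one, I would pass to the cross–section: writing $u=r\,g(\theta)$, the function $g$ is a Dirichlet eigenfunction on $\Sigma=\Om\cap S^{n-1}$ with eigenvalue $n-1$ subject to the overdetermined condition $|\nabla_S g|=1$ on $\p\Sigma$, the half-space solution being $\Sigma$ a hemisphere. I would then insert into the stability inequality test functions of separated form $\f=\rho(r)\,\eta(\theta)$ modulating the near-kernel directions $e\cdot\nu$, and integrate out the radial variable. The tangential behavior of $H$ along $F$ is governed by a Simons-type identity: differentiating $D^2u\,\nu=H\nu$ and using harmonicity produces a differential inequality for $H$ (equivalently for $|A|$) along $F$ whose coefficient depends on $\dim\p\Sigma=n-1$.

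\textbf{Dimension threshold and conclusion.} Combining this Simons-type inequality with the stability inequality and the translation identity should yield, after integration by parts, an inequality of the shape $c_n\int_F|A|^2(\cdots)\le 0$ with $c_n>0$ precisely when $n\le 4$; this forces $A\equiv0$ on $F$, so the free boundary is a hyperplane through the origin, $\Sigma$ is a hemisphere, and $u=(x\cdot\nu)^+$. I expect the main obstacle to be exactly this final step. Unlike the minimal surface Bernstein problem, the stability functional here is \emph{nonlocal}: the curvature term is tested against the Dirichlet energy of a harmonic extension rather than against a local Jacobi operator, so the delicate task is to choose the radial profile $\rho$ and the angular test function $\eta$ so that the harmonic extension $w$ can be estimated sharply enough to extract the correct dimensional constant. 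Controlling this nonlocal term, and in particular tracking the constant that separates $n=4$ from $n=5$, is where I expect essentially all of the difficulty to lie.
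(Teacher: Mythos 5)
Your setup is sound, and the first half of your outline matches the paper's framework: the reduction of the second variation \eqref{stab} to the boundary quadratic-form inequality $\int_{\p\Om}Hf^2\,d\sigma\le\int_\Om|\nabla f|^2\,dx$ is exactly the paper's starting point \eqref{stab1} (quoted from [CJK]), the observation that the directional derivatives $e\cdot\nabla u$ solve the linearized equation and annihilate the form is correct (modulo the cutoff issues you acknowledge), and the idea of coupling a Simons-type differential inequality for second derivatives of $u$ with stability is indeed the engine of the proof. But everything after that is a plan rather than a proof, and the step you defer --- extracting the constant that separates $n=4$ from $n=5$ --- is precisely where the theorem lives. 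Worse, the natural implementation of your sketch provably fails in dimension $4$: if one runs the Simons-type argument with the obvious quantity $w=\|D^2u\|$ (equivalently $|A|^2+H^2$ on the free boundary), instability follows only when $(n-2)^2/4(n-1)<1/L$ with $L$ the boundary constant \eqref{L}, and the paper shows in Section~\ref{s3} that for $n=4$ the worst boundary configuration gives $L^*=7/2$, while the interior inequality forces the exponent $\alpha\ge 1/3>2/7$. So the two constraints are incompatible, and no refinement of the radial profile $\rho$ or of the estimate on the harmonic extension can repair this: the obstruction is pointwise, coming from the fact that the interior and boundary constraints are extremized by \emph{different} curvature configurations (one positive tangential eigenvalue versus one negative one). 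This is exactly why the conjectured range $n\le 6$ remains open and why your heuristic ``$c_n>0$ precisely when $n\le4$'' cannot be obtained along the route you describe.

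The missing idea is the content of Section~\ref{s4}: replace $\|D^2u\|$ by the asymmetric quantity $w^2=\sum_{\lambda_k>0}\lambda_k^2+4\sum_{\lambda_k<0}\lambda_k^2$, where $\lambda_k$ are the eigenvalues of $D^2u$. The interior Simons-type inequality survives for this $w$ --- this is Theorem~\ref{p3}, valid for \emph{any} convex, symmetric, degree-one function of the eigenvalues --- while the boundary constant improves from $7/2$ to $3$ when $n=4$ (inequality \eqref{47}). Taking $\bar v=w^{1/3}$, the interior and boundary constraints then meet with exact equality, so one further needs the equality-case analysis: equality in \eqref{47} forces $\lambda_2>0$ and $\lambda_3=\lambda_4<0$, in particular $\lambda_2\ne\lambda_3$, and by the strict-convexity remark in the proof of Theorem~\ref{p3} this makes the interior inequality strict near such boundary points; one then concludes instability from the strict-subsolution criterion, Proposition~\ref{p1}. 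None of this --- the weighted eigenvalue function, the convexity theorem that justifies its interior inequality, the improved boundary computation, or the equality-case bookkeeping --- is present in or recoverable from your outline, so the proposal identifies the right framework but does not constitute a proof of the theorem.
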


For dimension $n = 3$ this result was obtained by Caffarelli, Jerison and Kenig in \cite{CJK}, and they conjectured that it remains true up to dimension $n\le 6$. On the other hand De Silva and Jerison provided in \cite{DJ1} an example of a nontrivial minimal solution in dimension $n=7$.  

The main consequence of Theorem \ref{d4} is that it implies the smoothness of the free boundary for minimizers in both the one-phase and {\it two-phase problem} in dimension $n \le 4$. Moreover, by the dimension reduction arguments of Weiss \cite{W}, we obtain the following regularity result.

\begin{cor}
 Let $v$ be a minimizer of the energy functional $$J(v):=\int_{B_1} \left(|\nabla v|^2 + Q_+(x) \chi_{\{v>0\}} + Q_-(x) \chi_{\{v \le 0\}} \right) dx$$ with $Q_\pm$ smooth functions satisfying $Q_+ > Q_-$. Then the free boundary $$F(v):=\p \{v>0\} \cap B_1$$ is a smooth hypersurface except possibly on a closed singular set $\Sigma \subset F(v)$ of Hausdorff dimension $n-5$, and 
 $$(v_\nu^+)^2-(v_\nu^-)^2=Q_+-Q_- \quad \quad \mbox{on $F(v) \setminus \Sigma$}.$$  
 \end{cor}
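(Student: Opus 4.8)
The plan is to combine the classical variable-coefficient regularity theory for the two-phase problem with the cone classification of Theorem \ref{d4}, the link being a Weiss-type monotonicity formula together with Federer's dimension-reduction principle.

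First I would set up the blow-up analysis. After freezing the coefficients, minimizers of $J$ satisfy the same optimal (Lipschitz) regularity and nondegeneracy estimates as in the one-phase case, where the strict inequality $Q_+>Q_-$ provides two-sided nondegeneracy of $v$ across $F(v)$. Consequently the rescalings $v_{x_0,r}(x):=v(x_0+rx)/r$ are precompact in $C^0_{loc}$ with locally uniform convergence of the free boundaries. Using a Weiss-type monotonicity formula for the rescaled energy — and noting that $Q_\pm(x_0+rx)\to Q_\pm(x_0)$ as $r\to 0$ — I would show that every blow-up limit $v_0$ at a point $x_0\in F(v)$ is a homogeneous-of-degree-one global minimizer of the frozen functional with the constant coefficients $Q_\pm(x_0)$, homogeneity being exactly the case of equality in the monotonicity formula.

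Next I would classify the blow-up cones. Call $x_0$ \emph{regular} if some blow-up is a one- or two-plane half-space solution, and \emph{singular} otherwise. Via the Alt--Caffarelli--Friedman monotonicity formula I would argue that at a genuine two-phase point, where both $v_0^+$ and $v_0^-$ are nondegenerate, the blow-up must split $\R^n$ into two half-spaces with $(v_{0,\nu}^+)^2-(v_{0,\nu}^-)^2=Q_+(x_0)-Q_-(x_0)>0$; hence $F(v)$ is $C^{1,\alpha}$ and therefore smooth nearby. The upshot is that every singular blow-up cone is supported where one phase degenerates, so that $v_0$ (up to sign) is a nontrivial homogeneous \emph{one-phase} minimizer. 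The decisive step is then to invoke Theorem \ref{d4}: such a minimizer is in particular stable in the sense of \eqref{stab}, so in dimension $n\le 4$ it is the one-dimensional solution, i.e.\ a half-space. Thus no singular cones exist in dimensions $\le 4$, and Federer's dimension reduction (in the form adapted to this setting by Weiss \cite{W}) yields that the singular set $\Sigma$ is closed with Hausdorff dimension at most $n-5$. Finally, the relation $(v_\nu^+)^2-(v_\nu^-)^2=Q_+-Q_-$ on $F(v)\setminus\Sigma$ follows from the first (inner) variation of $J$ at a regular point, matching the Neumann data of the two phases against the jump $Q_+-Q_-$ of the bulk coefficient.

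I expect the main obstacle to be the reduction of two-phase singular cones to one-phase ones: this requires controlling the interaction of the two phases along the free boundary and excluding any genuinely two-phase singular cone, which is precisely where $Q_+>Q_-$ is essential, since it prevents a degenerate balancing of the two Neumann derivatives. Care is also needed to confirm that minimality passes to blow-up limits and that the variable-coefficient monotonicity formula retains its rigidity (equality $\Rightarrow$ homogeneity) in spite of the lower-order error produced by the non-constant $Q_\pm$.
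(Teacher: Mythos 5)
Your proposal is correct and follows essentially the same route the paper intends: the paper offers no detailed proof of this corollary, deducing it directly from Theorem \ref{d4} (minimizing cones are stable, hence hyperplanes for $n\le 4$) combined with the dimension-reduction arguments of Weiss \cite{W}, which is exactly the blow-up/monotonicity/dimension-reduction scheme you describe, including the standard reduction of singular two-phase cones to one-phase ones via $Q_+>Q_-$.
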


Our proof of Theorem \ref{d4} is similar to James Simons's proof (see \cite{S})
of rigidity 
of stable minimal cones in low dimensions: we find a function
involving the second derivatives of $u$ which satisfies a differential
inequality for the linearized equation.  In particular, the
proof in dimension 3 is not the same as the one in \cite{CJK}.

The paper is organized as follows. In Section \ref{s2} we collect
some basic facts about stability and the linearized equation of
$u$. In Section \ref{s3} we obtain the differential inequality for a
function involving $\|D^2u\|$ and deduce the rigidity result in
dimension $n=3$. Finally in Section \ref{s4} we treat the case
$n=4$ by modifying the function considered in Section \ref{s3}.

\smallskip

\section{Preliminaries and stability}\label{s2}

In this section we recall some facts about stability of solutions $u$ of \eqref{op} that were obtained in \cite{CJK}. We insist more on the non-variational approach to stability.   

\subsection{Normals for second derivatives at the boundary}

Fix a point $$x_0 \in \p \Om \setminus \{0\}$$ and choose a system of coordinates at $x_0$ such that $$ e_n=\nu_{x_0} \quad \mbox{the interior normal at $x_0$}$$ and $\p \Om$ is given locally by the graph of a function $g$  
$$\Om =\{x_n >g(x')  \}, \quad \mbox{with} \quad \nabla_{x'}g(x'_0)=0, \quad D_{x'}^2g(x_0') \quad \mbox{diagonal.}$$
By differentiating $u(x',g(x'))= 0$ in the $i$, $j$ directions, $i$, $j<n$, we obtain
\begin{equation}\label{1}
u_i=0, \quad \quad u_{ij}=-u_n \, g_{ij}=-g_{ij} \quad \quad \mbox{at $x_0$},
\end{equation}
where subscripts indicate partial derivatives.   Differentiating
$|\nabla u|^2(x',g(x')) = 1$, we obtain
\[
\sum_{k=1}^n u_k \, u_{ki}=0, \quad \quad \sum_{k=1}^n (u_k \, u_{kij}+
u_{ki}\, u_{kj})=-u_n\,  u_{nn} \, g_{ij}
\]
for $i$, $j<n$.  In conclusion, applying $u_n=1$ and \eqref{1} at $x_0$ we have, 
\[
u_{in}=0 \quad \mbox{at} \ x_0, \quad i<n.
\]
Consequently, $D^2u$ is diagonal at $x_0$, and
\begin{align}\label{2}
\nonumber u_{ijn}&= \, 0 \quad \quad \quad \quad \quad \quad  
\qquad \mbox{if $i$, $j<n$ and $i \ne j$,}\\
u_{iin}&=\, u_{nn}\, u_{ii} -u^2_{ii} \quad \quad \quad  \  \mbox{for each $i<n$,}\\
\nonumber u_{nnn}&= \sum_{k =1}^n  u_{kk}^2,
\end{align}
where the last equation follows from the sum over $i$ of the previous one 
and $\triangle u = \triangle u_n=0$.

\subsection{The linearized equation}
A smooth function $v:\ov \Om \to \R$ solves the linearized equation for a solution $u$ if 

\begin{equation}\label{le}
\left \{  \begin{array}{l}
\triangle v =0 \quad \quad \quad \mbox{in $\Om$,}\\
\  \\
v_\nu=u_{\nu\nu} \, v \quad \quad \mbox {on $\p \Om \setminus \{0\}$.}
\end{array}
\right.
\end{equation}
Notice that from $\triangle u=0$ and \eqref{1} it follows that
$$-u_{\nu \nu}=H$$ where $H$ denotes the mean curvature of $\p \Om$ oriented towards the complement of $\Om$. Thus the second equation in \eqref{le} can be rewritten as
$$v_\nu + H\, v=0  \quad \quad \mbox{on $\p \Om$}.$$

In the case when $\Omega$ is a cone different from a half-space, it easily follows that $$H>0.$$ Indeed, $|\nabla u|^2/2$ is a subharmonic function homogeneous of degree $0$, and its maximum occurs on the boundary. Then either $|\nabla u|^2/2$ is constant or by Hopf lemma its normal derivative on $\p \Om$, which equals $-H$, is negative.  

The linearized equation \eqref{le} is obtained by requiring that $(u+\eps v)^+$ solves the original equation up to an error of order $O(\eps^2)$ (here we think that $u$ and $v$ are extended smoothly in a neighborhood of $\p \Om$). Thus the function $v$ above represents the infinitesimal vertical distance between the graph of a perturbed solution and the graph of the original solution $u$ of \eqref{op}.

We deduce briefly \eqref{le}. The interior condition for $v$ is obvious. For the boundary condition we see that the free boundary of $(u+\eps v)^+$ lies in $O( \eps^2)$ of the surface $\Gamma_\eps$ obtained as
$$x \in \Gamma_0:=\p \Omega \quad \longmapsto \quad x_\eps \in \Gamma_\eps, \quad \quad x_\eps:=x-\eps  \, v(x) \, \nu_x.$$
Thus
\begin{align*}
\nabla (u+\eps v)(x_\eps)= &\nabla u(x)- D^2 u(x) (x_\eps-x) + \eps\, \nabla v(x_\eps) + O(\eps^2) \\ 
=&\nu - \eps \, v \, (D^2 u) \,  \nu + \eps \, \nabla v(x) + O(\eps^2)
\end{align*}
and
$$| \nabla (u+\eps v)(x_\eps)|^2=1+2 \eps (v_\nu-v \, u_{\nu \nu}) +O(\eps^2),$$
which gives the second condition in \eqref{le}.

Clearly the directional derivatives $v = e \cdot \nabla u$ 
solve the linearized equation, since they arise from translation of the
solution $u$.  The boundary equation can also be seen directly 
in the coordinates of Section 2 for which $D^2u$ is diagonal at $x_0$:   
$v = e\cdot \nu$ and $v_\nu = v_n = (e\cdot\nu) u_{nn} = u_{\nu\nu} v$.

\subsection{Criteria for stability and instability}  Let $u$ be 
a homogeneous one-phase free boundary solution $u$ as in \eqref{op} 
supported on the cone $\Om$.  Consider the annulus
\[
\mathcal U =  \{x\in \R^n:  0< c_1 < |x| < c_2\}
\]
The main lemma of \cite{CJK} says
that the stability \eqref{stab} under perturbations in $\mathcal U$ 
implies that for all smooth functions $f$ supported in $\mathcal U$,
\begin{equation}\label{stab1} 
\int_{\p \Om} Hf^2 d\sigma \le \int_{\Om} |\nabla f|^2 dx \, .
\end{equation}
We will deduce from \eqref{stab1} a criterion for instability in the form 
we will need, that is, expressed in terms of subsolutions.

We say that $v$ is a subsolution to the linearized equation
\eqref{le} in $\Om \cap \mathcal U$ if 
 \begin{equation}\label{si}
\left \{  \begin{array}{l}
\triangle  v \ge 0 \quad \quad \quad \mbox{in $\Om \cap \mathcal U$,}\\
\  \\
 v_\nu + H \, v \ge 0 \quad \quad \mbox {on $\mathcal U\cap \p \Om$,}
\end{array}
\right.
\end{equation}
with
\[
v \ge 0 \quad \mbox{on $\Om \cap \mathcal U$,} \quad \quad   v=0 \quad \quad \mbox{on $\Om \cap \p \mathcal U$.}  
\]

It follows from integration by parts that if there is a 
{\em strict} subsolution $v$ as in \eqref{si}, then $u$ is 
unstable.  Indeed, 
\begin{equation}\label{byparts}
\int_\Om |\nabla v|^2 dx 
 = -\int_\Om v \triangle v \, dx - \int_{\p \Om} v v_\nu \, d\sigma 
 \le - \int_{\p \Om} v v_\nu \, d\sigma  \le \int_{\p \Om} Hv^2  \, d\sigma  
\end{equation}
If either inequality in \eqref{byparts} is strict, then $u$ is unstable in $\mathcal U$. 

We prove Theorem \ref{d4} by constructing an explicit subsolution $v$ 
to \eqref{si} which depends on the second derivatives of $u$.  The
function $v$ is a product of spherical and radial parts.
Denote by $\Om_S$ the intersection of $\Om$ with
the unit sphere and write $\triangle_S$ for the Laplacian
on the sphere.  The following result is implicit in \cite{CJK},
but not stated or used directly there.
\begin{prop} \label{p0.5}   Suppose there is a nonnegative function 
$\f$ defined on $\bar \Om_S$ that is a strict subsolution
to 
  \begin{equation*} 
\left \{  \begin{array}{l}
\triangle_S \f \ge \lambda \f  \qquad \mbox{in $\Om_S$,}\\
\  \\
\f_\nu + H \, \f  \ge 0 \quad \quad \mbox {on $\p \Om_S$,}
\end{array}
\right.
\end{equation*}
and suppose that the constant $\lambda$ satisfies
\[
\lambda \ge (n-2)^2/4.
\]
Then $u$ is unstable in the sense that \eqref{stab} fails for
some perturbation $\Psi$ in a sufficiently large annulus.
\end{prop}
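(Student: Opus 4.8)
The plan is to separate variables, writing the candidate subsolution on the annulus $\mathcal U$ as
$$v(x)=\rho(r)\,\f(x/r), \qquad r=|x|,$$
so that $v\ge 0$ (since $\f\ge 0$ and $\rho$ will be chosen $\ge 0$) and $v$ vanishes on the two spherical caps $\{|x|=c_1\}$, $\{|x|=c_2\}$ once $\rho(c_1)=\rho(c_2)=0$. In polar coordinates the Laplacian splits as $\triangle v=\rho''\f+\frac{n-1}{r}\rho'\f+\frac1{r^2}\rho\,\triangle_S\f$, and writing $\triangle_S\f=\lambda\f+(\triangle_S\f-\lambda\f)$ gives
$$\triangle v=\frac{\f}{r^2}\,L\rho+\frac{\rho}{r^2}\big(\triangle_S\f-\lambda\f\big),\qquad L\rho:=r^2\rho''+(n-1)r\rho'+\lambda\rho,$$
where the second term is nonnegative because $\rho\ge0$ and $\f$ is a spherical subsolution. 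On $\p\Om$ the normal $\nu$ is tangent to the sphere, so by homogeneity $v_\nu=\frac{\rho}{r}\f_\nu$ and the mean curvature of the cone at $r\theta$ equals $H(\theta)/r$; hence $v_\nu+Hv=\frac{\rho}{r}(\f_\nu+H\f)\ge0$ directly from the boundary hypothesis on $\f$, and $v=0$ on $\Om\cap\p\mathcal U$. Thus verifying \eqref{si} reduces to producing $\rho\ge0$ that vanishes at $c_1,c_2$ with $L\rho\ge0$ on $(c_1,c_2)$, after which \eqref{byparts} yields $\int_\Om|\nabla v|^2\le\int_{\p\Om}Hv^2$, contradicting \eqref{stab1} as soon as one of the inequalities is strict.

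Next I would analyze the Euler operator $L$, whose indicial polynomial $\alpha^2+(n-2)\alpha+\lambda$ has roots $-\tfrac{n-2}2\pm\sqrt{(n-2)^2/4-\lambda}$. When $\lambda>(n-2)^2/4$ the roots are complex with imaginary part $\beta=\sqrt{\lambda-(n-2)^2/4}>0$, and
$$\rho(r)=r^{-(n-2)/2}\sin\!\big(\beta\ln(r/c_1)\big)$$
solves $L\rho=0$, is positive on $(c_1,c_2)$, and vanishes at both endpoints once we set $c_2=c_1e^{\pi/\beta}$. With this choice $\triangle v=\frac{\rho}{r^2}(\triangle_S\f-\lambda\f)\ge0$ and all of \eqref{si} holds.

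The main obstacle is the borderline case $\lambda=(n-2)^2/4$, where $\beta=0$ and $L$ admits only $r^{-(n-2)/2}$ and $r^{-(n-2)/2}\ln r$. Substituting $\rho=r^{-(n-2)/2}w(\ln r)$ gives $L\rho=r^{-(n-2)/2}w''$, so $L\rho\ge0$ forces $w$ convex, and a nonnegative convex function vanishing at both endpoints is identically zero; no nontrivial $\rho$ exists at the exact threshold, reflecting the marginal stability of the half-space. This is exactly where the \emph{strictness} of the subsolution must be spent to gain a supercritical eigenvalue. Since $\f$ is a strict subsolution and positive on the compact set $\bar\Om_S$, the quotient $\triangle_S\f/\f$ is continuous and bounded below by some $\Lambda>\lambda\ge(n-2)^2/4$; choosing any $\lambda'\in\big((n-2)^2/4,\Lambda\big)$ we have $\triangle_S\f\ge\Lambda\f>\lambda'\f$, so $\f$ is a strict subsolution for the supercritical eigenvalue $\lambda'$, and the construction of the preceding paragraph applies with $\lambda'$ and $\beta'=\sqrt{\lambda'-(n-2)^2/4}$. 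Then $\triangle v=\frac{\rho}{r^2}(\triangle_S\f-\lambda'\f)>0$ wherever $\f>0$, so the first inequality in \eqref{byparts} is strict and $u$ is unstable. The remaining care is in treating possible interior zeros of $\f$ (where the quotient blows up rather than causing trouble, so $\Lambda>(n-2)^2/4$ persists) and in confirming the homogeneity scaling of $\nu$ and $H$ used above.
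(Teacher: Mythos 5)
Your construction for the strictly supercritical case $\lambda > (n-2)^2/4$ is sound, and it matches the paper's mechanism: separation of variables, an oscillating solution of the Euler ODE, and an annulus between consecutive zeros of the radial factor. The genuine gap is in your treatment of the borderline case $\lambda = (n-2)^2/4$ -- and that is exactly the case that matters, since in the application to Theorem \ref{d4} (where $n=4$, $\mu = 1/3$, $\gamma = 4/9$) the hypothesis holds with equality: $\lambda = \gamma + \mu(n-2-\mu) = 1 = (n-2)^2/4$. Your claim that strictness of the subsolution makes $\triangle_S\f/\f$ continuous and bounded below by some $\Lambda > \lambda$ is unjustified and false in general. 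First, $\f$ is only assumed nonnegative, not positive on $\bar\Om_S$. Second, and more fundamentally, ``strict subsolution'' in this paper means that one of the two inequalities is strict somewhere, just enough to make the \emph{integrated} inequality strict after integration by parts as in \eqref{byparts}; it does not mean a uniform pointwise gap. For instance, $\f$ could satisfy $\triangle_S\f = \lambda\f$ exactly throughout $\Om_S$, with strictness occurring only in the boundary condition $\f_\nu + H\f > 0$; then $\triangle_S\f/\f \equiv \lambda$ and no $\Lambda > \lambda$ exists. This is not a hypothetical situation: the subsolution built in Section 4 of the paper is strict only in this delicate local sense (equality can hold in \eqref{47} at some boundary points, compensated by strictness of the interior inequality near those points). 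Even if the interior inequality were strict at every interior point, the gap $\triangle_S\f - \lambda\f$ could degenerate relative to $\f$ near $\p\Om_S$, so no multiplicative improvement follows.

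The missing idea -- and the paper's actual route -- is variational. Define $-\Lambda$ as the infimum of the Rayleigh quotient $\left(\int_{\Om_S}|\nabla\psi|^2 - \int_{\p\Om_S}H\psi^2\right)\big/\int_{\Om_S}\psi^2$. Testing with $\f$ and integrating by parts converts ``strict somewhere'' into the strict integral inequality $\int_{\Om_S}|\nabla_S\f|^2 - \int_{\p\Om_S}H\f^2 < -\lambda\int_{\Om_S}\f^2$, hence $\Lambda > \lambda \ge (n-2)^2/4$. One then discards $\f$ entirely and works with the minimizer $\bar\psi$, which is positive in $\Om_S$ and satisfies the \emph{exact} equations $\triangle_S\bar\psi = \Lambda\bar\psi$ and $\bar\psi_\nu + H\bar\psi = 0$. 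Separation of variables $v = f(r)\bar\psi$, with the radial parameter $\beta$ chosen strictly between $(n-2)^2/4$ and $\Lambda$, gives $\triangle v = (\Lambda-\beta)f\bar\psi/r^2 > 0$, so $v$ is a strict subsolution on the annulus and \eqref{stab1} fails. Your separation-of-variables computation, the boundary scaling $v_\nu + Hv = \frac{\rho}{r}(\f_\nu + H\f)$, and the ODE analysis are all correct and identical in spirit to the paper's; but without the passage to the first Robin eigenfunction you cannot close the critical case, and with it your two-case split becomes unnecessary, since the eigenvalue gap $\Lambda > \lambda$ supplies the needed room in all cases at once.
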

\begin{proof}  Define $\Lambda$ by 
\begin{equation}\label{inf}
-\Lambda:= \inf_{\psi }  \frac{\int_{\Om_S}|\nabla \psi|^2 - \int_{\p \Om_s}H \psi^2  }{\int_{\Om_S} \psi^2 } .
\end{equation}
As in \eqref{byparts}, an integration by parts and the assumption
that $\f$ is a strict subsolution yields
\[
\int_{\Om_S} |\nabla_S\f|^2 - \int_{\p \Om_S} H \f^2 < - \lambda \int_{\Om_S} \f^2,
\]
so that $\Lambda > \lambda$.   It is  well known that the minimizer $\bar \psi$
of \eqref{inf}
exists and satisfies $\bar \psi \in C^\infty(\bar \Om_S)$, $\bar \psi > 0$ in $\Om_S$,
and
\[
\triangle_S \bar \psi = \Lambda \bar \psi \ \mbox{ on $\Om_S$}; \quad
\bar \psi_\nu  + H \bar \psi = 0 \ \mbox{ on $\p \Om_S$}.
\]
Extend $\bar \psi$ to be homogeneous of degree $0$ on $\Om$
and define $v: = f(r) \bar \psi$; $r = |x|$.   Then 
\[
\triangle v= (f'' +(n-1)f'/r  + \Lambda f/r^2) \bar \psi.
\]
On the other hand, it is straightforward to check that if $f$ satisfies the constant coefficients ODE 
$$f''+\alpha f'/r +  \beta f/r^2 =0,$$
then $f$ oscillates around $0$ if and only if $$4 \beta > (\alpha -1)^2.$$
Let $\alpha= n-1$. Since $\Lambda > \lambda \ge (n-2)^2/4$, we may 
choose $\beta$ so that
\[
\Lambda > \beta >  (\alpha-1)^2/4 = (n-2)^2/4,
\]
and let $\mathcal U$ be the annular region between 
two consecutive zeros of $f$ where $f$ is positive. Then 
$v>0$ on $\Om \cap \mathcal U$, and
\[
\triangle v = (\Lambda - \beta) f\bar \psi/r^2 >0 \ \mbox{in $\Om \cap \mathcal U$.}
\]
Moreover, since $f$ is radial, $v_\nu + H v = 0$ on $\p \Om$
and $v=0$ on $\Om\cap \p \mathcal U$ because $f=0$ on $\p \mathcal U$.
Therefore $v$ is a strict subsolution for \eqref{si}, and $u$ is unstable.
\end{proof}

It remains to find the function $\f$.  It will turn out
that $\f$ is constructed using functions that are homogeneous of
degree $-\mu \neq 0$, so we will rewrite 
Proposition \ref{p0.5} as follows.
 \begin{prop}\label{p1} 
If there exists $\bar v \ge 0$, homogeneous of degree $-\mu$ on $\Om$, 
that is a strict subsolution for the following problem 
  \begin{equation}\label{si2}
\left \{  \begin{array}{l}
\triangle \bar v \ge \gamma \,
\bar v/ |x|^2 \quad \quad \quad \mbox{in $\Om$,}\\
\  \\
\bar v_\nu + H \,\bar v \ge 0 \quad \quad \mbox {on $\p \Om \setminus \{0\}$,}
\end{array}
\right.
\end{equation}
and the constant $\gamma$ satisfies
\begin{equation}\label{gi}
\gamma \ge \left(\frac n2 -1 -\mu \right)^2,
\end{equation}
then $u$ is unstable 
\end{prop}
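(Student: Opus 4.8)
The plan is to reduce Proposition \ref{p1} to the already-established Proposition \ref{p0.5} by separating variables. Since $\bar v$ is homogeneous of degree $-\mu$, I would write
\[
\bar v = |x|^{-\mu}\, \f,
\]
where $\f \ge 0$ is the restriction of $\bar v$ to the unit sphere, regarded as a function on $\bar \Om_S$ (equivalently, the degree-zero angular factor of $\bar v$). The goal is then to verify that $\f$ is a strict subsolution of the spherical system in Proposition \ref{p0.5} for a suitable constant $\lambda$, and that the hypothesis \eqref{gi} on $\gamma$ forces $\lambda \ge (n-2)^2/4$.

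For the interior inequality I would use the polar decomposition
\[
\triangle = \p_{rr} + \frac{n-1}{r}\, \p_r + \frac{1}{r^2}\, \triangle_S, \qquad r = |x|.
\]
Applying this to $\bar v = r^{-\mu}\f$ and using that $\f$ is independent of $r$ gives
\[
\triangle \bar v = r^{-\mu - 2}\left(\triangle_S \f + \mu(\mu - n + 2)\, \f\right),
\]
so the interior condition $\triangle \bar v \ge \gamma\, \bar v/|x|^2$ in \eqref{si2} is equivalent to
\[
\triangle_S \f \ge \lambda\, \f \quad \mbox{in $\Om_S$}, \qquad \lambda := \gamma - \mu(\mu - n + 2),
\]
with strict inequality wherever \eqref{si2} is strict.

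For the boundary condition I would exploit that $\Om$ is a cone. The outer normal $\nu$ at a point of $\p\Om\setminus\{0\}$ is tangent to the sphere, so $\nu\cdot x = 0$ and the radial part of $\nabla\bar v$ does not contribute to $\bar v_\nu$. Writing $\nabla\bar v = -\mu r^{-\mu-1}\f\, \hat r + r^{-\mu-1}\nabla_S\f$ and pairing with $\nu$ leaves only the tangential term, so that, after factoring out the common power of $r$ and using that $H$ is homogeneous of degree $-1$, the condition $\bar v_\nu + H\bar v \ge 0$ on $\p\Om$ reduces exactly to $\f_\nu + H\f \ge 0$ on $\p\Om_S$, again preserving strictness. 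Thus $\f$ is a strict subsolution of the system in Proposition \ref{p0.5}.

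It remains to match the constants. Substituting \eqref{gi}, namely $\gamma \ge (n/2 - 1 - \mu)^2$, into $\lambda = \gamma - \mu(\mu - n + 2)$ and expanding, the $\mu$-dependent terms cancel and one finds $\lambda \ge (n-2)^2/4$, which is precisely the threshold required in Proposition \ref{p0.5}. Applying that proposition to $\f$ then yields the instability of $u$. The computations here are routine; the one point needing care is the boundary reduction, where I must check both that the normal is purely tangential and that the degree-$(-1)$ scaling of $H$ makes the cone and spherical boundary conditions coincide.
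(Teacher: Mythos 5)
Your proposal is correct and follows essentially the same route as the paper: the paper's (very terse) proof likewise reduces to Proposition \ref{p0.5} by observing that the angular part of $\bar v$ satisfies $\triangle_S \f \ge \bigl(\gamma + \mu(n-2-\mu)\bigr)\f$ and that \eqref{gi} is exactly the condition $\gamma + \mu(n-2-\mu) \ge (n-2)^2/4$. Your additional verifications — the polar Laplacian computation, the fact that the cone normal is tangent to the sphere so the boundary conditions match, and the degree $-1$ homogeneity of $H$ — are precisely the details the paper leaves implicit.
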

Note that \eqref{si2} is equivalent to 
 \begin{equation}\label{si3}
\left \{  \begin{array}{l}
\triangle (\log \bar v) + |\nabla (\log \bar v)|^2 \ge
\gamma /|x|^2 \quad \quad \quad \mbox{in $\Om \cap \{\bar v>0\}$,}\\
\  \\
\frac 1 H (\log \bar v)_\nu   \ge -1 \quad \quad \mbox {on $\p \Om \cap \{\bar v>0\}$.}
\end{array}
\right.
\end{equation}
\begin{proof}  The function $\bar v$ satisfies
\[
\triangle_S \bar v \ge (\gamma + \mu(n-2-\mu))\bar v \ \mbox{ on $\Om_S$}
\]
and the condition
\[
\gamma + \mu(n-2-\mu) \ge (n-2)^2/4
\]
is the same as \eqref{gi}.
\end{proof}

Although we do not need this is the sequel, we remark 
that  the sufficient conditions stated here for instability are
also necessary, as shown in the following proposition.
\begin{prop} \label{equiv}  The following are equivalent.

a) The stability inequality \eqref{stab1} 
holds for all $f\in C_0^\infty(\mathcal U)$.

b) There exists $\bar f$ satisfying $\bar f>0$ and 
$\triangle \bar f = 0$
in $\Om \cap \mathcal U$ and the boundary condition
\[
\bar f_\nu + H \, \bar f =  0 \quad \mbox {on \  $\mathcal U\cap \p \Om$}.
\]

c) There are no nonnegative strict subsolutions as in \eqref{si},
on any annulus $\mathcal U' \subset \subset \mathcal U$, 
that is, no $v\ge0$ strict subsolutions to 
\[
\triangle v\ge 0 \quad \mbox{in \ $\Om \cap \mathcal U'$};
\qquad v_\nu + H \, v \ge 0 \quad \mbox {on \ $\mathcal U'\cap \p \Om$};
\qquad 
v = 0 \quad \mbox{on \ $\Om \cap \p \mathcal U'$.}
\]
\end{prop}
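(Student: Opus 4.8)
The three statements are the free-boundary analogue of the classical circle of equivalences (Allegretto--Piepenbrink, or the ground-state transform) relating positivity of a Schr\"odinger-type quadratic form, existence of a positive solution, and nonexistence of subsolutions; here the relevant form is the Robin form $Q(f)=\int_\Om|\nabla f|^2-\int_{\p\Om}Hf^2$ and the ``operator'' is $-\triangle$ with the oblique condition $f_\nu+Hf=0$ on $\p\Om\cap\mathcal U$ together with Dirichlet data on the spherical caps $\Om\cap\p\mathcal U$. I would prove the cycle $b\Rightarrow a\Rightarrow c\Rightarrow b$. For $b\Rightarrow a$ I would use the ground-state substitution: given the positive solution $\bar f$, write an arbitrary $f\in C_0^\infty(\mathcal U)$ as $f=\bar f\,w$, expand $|\nabla f|^2=w^2|\nabla\bar f|^2+2w\bar f\,\nabla w\cdot\nabla\bar f+\bar f^2|\nabla w|^2$, and integrate the first two terms by parts as $\int_\Om\nabla(w^2\bar f)\cdot\nabla\bar f$. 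The interior contribution drops out because $\triangle\bar f=0$, and the boundary term becomes $\int_{\p\Om}Hf^2$ once one uses $\bar f_\nu=-H\bar f$ (the caps contribute nothing since $w$ is supported in $\mathcal U$). This yields the identity $\int_\Om|\nabla f|^2=\int_{\p\Om}Hf^2+\int_\Om\bar f^2|\nabla w|^2$, whose nonnegative remainder is exactly \eqref{stab1}.

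For $a\Rightarrow c$ I would invoke the integration by parts \eqref{byparts} already recorded in the text: a strict subsolution $v$ on any $\mathcal U'\subset\subset\mathcal U$, extended by zero, is an admissible $H^1$ competitor, and strictness of one of the two inequalities in \eqref{byparts} gives $\int_\Om|\nabla v|^2<\int_{\p\Om}Hv^2$, contradicting the stability inequality. Thus $a$ rules out strict subsolutions on every compactly contained annulus.

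The substantial step is $c\Rightarrow b$, the production of a positive \emph{harmonic} function. I would introduce the principal eigenvalue
\[
\lambda_1(\mathcal U')=\inf_{f\in C_0^\infty(\mathcal U')}\frac{\int_\Om|\nabla f|^2-\int_{\p\Om}Hf^2}{\int_\Om f^2},
\]
whose minimizer $\phi_1>0$ solves $-\triangle\phi_1=\lambda_1\phi_1$ in $\Om\cap\mathcal U'$ with $\phi_{1,\nu}+H\phi_1=0$ on the lateral boundary and $\phi_1=0$ on the caps. If $\lambda_1(\mathcal U')<0$ for some $\mathcal U'$, then $\triangle\phi_1=-\lambda_1\phi_1>0$, so $\phi_1$ is a strict subsolution, contradicting $c$; hence $\lambda_1(\mathcal U')\ge0$ for every $\mathcal U'\subset\subset\mathcal U$. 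Taking an exhaustion $\mathcal U_k\subset\subset\mathcal U_{k+1}\subset\subset\mathcal U$ and using strict domain monotonicity of the principal eigenvalue, I obtain $\lambda_1(\mathcal U_k)>\lambda_1(\mathcal U_{k+1})\ge0$, so each $\lambda_1(\mathcal U_k)>0$. Positivity of $\lambda_1(\mathcal U_k)$ makes the mixed problem $\triangle w_k=0$ in $\Om\cap\mathcal U_k$, $w_{k,\nu}+Hw_k=0$ on the lateral boundary, $w_k=1$ on the caps, uniquely solvable with $w_k>0$ by the maximum principle. Fixing a base point $x_*$ and normalizing $w_k(x_*)=1$, the Harnack inequality (including up to the oblique boundary $\p\Om\cap\mathcal U$) together with interior and boundary elliptic estimates give local uniform bounds; a subsequence converges to a function $\bar f\ge0$ that is harmonic in $\Om\cap\mathcal U$, satisfies $\bar f_\nu+H\bar f=0$ on $\p\Om\cap\mathcal U$, and has $\bar f(x_*)=1$, whence $\bar f>0$ by Harnack. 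This is precisely condition $b$, closing the cycle.

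I expect this last construction to be the main obstacle: one must produce a function that is \emph{exactly} harmonic, not merely a supersolution (which is all a single principal eigenfunction provides when $\lambda_1(\mathcal U)>0$), and this is what forces the use of inhomogeneous cap data together with the exhaustion and a compactness passage to the limit. The two points needing care are the upgrade of $\lambda_1\ge0$ to the strict $\lambda_1(\mathcal U_k)>0$ via strict domain monotonicity, and the validity of the Harnack inequality and elliptic estimates up to the Robin boundary, both of which rest on the smoothness of the cross-section and the boundedness of $H$ away from the origin.
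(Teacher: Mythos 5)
Your proposal is correct, but it runs the equivalence around the opposite cycle and with different arguments on each leg, so a comparison is worthwhile. The paper proves a) $\Rightarrow$ b) $\Rightarrow$ c) $\Rightarrow$ a): it gets b) $\Rightarrow$ c) from the maximum principle applied to the quotient $v/\bar f$ (a strict subsolution would force $v/\bar f$ to be constant), and c) $\Rightarrow$ a) by contrapositive, observing that if stability fails on a slightly smaller annulus then the negative-eigenvalue Rayleigh minimizer is itself a strict subsolution. You instead prove b) $\Rightarrow$ a) by the ground-state substitution $f=\bar f w$ --- which refines the paper's qualitative use of $\bar f$ into the identity $\int_\Om|\nabla f|^2=\int_{\p\Om}Hf^2+\int_\Om\bar f^2|\nabla w|^2$ --- and a) $\Rightarrow$ c) directly from \eqref{byparts}, the cheap converse of the paper's spectral leg. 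The real divergence is on the hard implication that produces the positive harmonic function. The paper disposes of a) $\Rightarrow$ b) in one line, asserting that the minimizer of the displayed annular Rayleigh quotient ``satisfies the required properties''; but that minimizer solves $-\triangle\bar f=\lambda_1\bar f$ with $\lambda_1\ge 0$ and vanishes on the caps, so it is harmonic only in the borderline case $\lambda_1=0$ (already for the half-plane solution on a thin annulus one has $\lambda_1>0$ and the minimizer is strictly superharmonic, while b) is witnessed by $\bar f\equiv 1$). As written, that step is incomplete; the natural repair within the paper's framework is to use the \emph{spherical} quotient \eqref{inf} and extend its minimizer $\bar\psi$ by separation of variables, choosing the homogeneity $-\mu$ from the indicial equation $\mu(n-2-\mu)=\Lambda$, which exploits the conical structure. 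Your c) $\Rightarrow$ b) is instead the general Fischer-Colbrie--Schoen exhaustion (nonnegativity of $\lambda_1$ on compactly contained subannuli, strict domain monotonicity, the mixed problem with cap data $1$, Harnack and compactness); it never uses homogeneity of $u$, so it is heavier but more robust, and you correctly identify as the crux exactly the point the paper glosses over. Two details to tighten in your version: positivity of $w_k$ does not follow from a naive Hopf-lemma maximum principle, which is inconclusive for the Robin sign $H>0$ --- instead test the equation with $\min(w_k,0)$ and use $\lambda_1(\mathcal U_k)>0$; and in a) $\Rightarrow$ c) the subsolution extended by zero is only Lipschitz, so a density argument is needed before invoking \eqref{stab1}.
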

\begin{proof}  To prove that a) implies b), note that the minimizer
$\bar f$ to 
\[
\inf_{f \in C_0^\infty(\mathcal U)} 
\frac{\int_\Om |\nabla f|^2 - \int_{\p \Om} Hf^2}
{\int_{\Om} f^2}
\]
satisfies the required properties.   To prove that b) implies c), observe
that if $v$ existed, then $\triangle (v/\bar f) \ge 0$ on 
$\Om\cap \mathcal U'$ and $(v/\bar f)_\nu\ge0$ on $\p (\Om\cap \mathcal U')$ 
so that $v/\bar f$ is constant.  But $v$ cannot be a multiple of $\bar f$. 

Finally, we prove that c) implies a) by establishing the 
contrapositive.   Suppose that a) is false. 
Then there exists a slightly smaller annulus
$\mathcal U' \subset \subset \mathcal U$ for which 
\[
-\delta: = \inf_{f\in C_0^\infty(\mathcal U')} 
\frac{\int_\Om |\nabla f|^2 - \int_{\p \Om} Hf^2}
{\int_{\Om} f^2} < 0
\]
The minimizer $g$ is a nonnegative strict subsolution in 
$\Omega \cap \mathcal U'$. The strictness follows from 
$\triangle g = \delta g > 0$.  This shows that c) does 
not hold. 
\end{proof}

Proposition \ref{equiv} says in particular that the stability of a solution 
$u$ in a region is equivalent to the existence of a positive 
solution to the linearized equation in the same region.
In fact, in non-variational elliptic problems this characterization can be 
taken as the definition of stability.  Typically in such
non-variational problems, when such a positive solution 
exists, then, in a neighborhood of the graph of $u$,
the space can be foliated by perturbed solutions.  By contrast,
the existence of a strict subsolution is
essentially equivalent to saying that solutions 
to the linearized equation must change sign
and corresponds in the nonlinear setting to the case 
when the graph of $u$ and the graph of ``nearby'' perturbed solutions
``cross each other."

 \section{The case $w=\|D^2u\|$}\label{s3}
 
 In this section we show that $$\bar v= w^\alpha$$ satisfies an inequality of the type \eqref{si2} where $w:=\|D^2u\|$, that is
 $$w^2:=\|D^2u\|^2= \sum_{i,j=1}^nu^2_{ij}.$$

 \subsection{The interior inequalty}
 First we obtain an inequality for harmonic functions which is similar to 
Simons's inequality for minimal surfaces.
 
 \begin{prop}\label{SI}
 Assume $u$ is harmonic and homogeneous of degree 1. Then
 $$w \, \triangle w \ge \frac {2}{n-1} \, |\nabla w|^2 + 2 \,  \frac{n-2}{n-1} \, \frac{w^2}{|x|^2},$$
 in the set $\{w>0\}$.
 \end{prop}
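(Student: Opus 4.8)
The plan is to establish the inequality in three stages: a Bochner-type identity, a reduction that uses homogeneity to isolate the radial direction, and a sharp pointwise algebraic inequality for symmetric trace-free $3$-tensors. First, since $u$ is harmonic every second derivative $u_{ij}$ is harmonic, so $\triangle(w^2)=\triangle\sum_{i,j}u_{ij}^2=2\sum_{i,j,k}u_{ijk}^2$, while on the other hand $\triangle(w^2)=2w\,\triangle w+2|\nabla w|^2$. Hence in $\{w>0\}$
\[
w\,\triangle w=\sum_{i,j,k}u_{ijk}^2-|\nabla w|^2,
\]
and the proposition is equivalent to the pointwise bound
\[
\sum_{i,j,k}u_{ijk}^2\ \ge\ \frac{n+1}{n-1}\,|\nabla w|^2+\frac{2(n-2)}{n-1}\,\frac{w^2}{|x|^2}.
\]

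Next I would fix an interior point $p$ and work in coordinates diagonalizing $D^2u(p)$, with eigenvalues $\lambda_1,\dots,\lambda_n$; harmonicity gives $\sum_i\lambda_i=0$. Differentiating Euler's relation twice yields the homogeneity identities $\sum_l x_l u_{li}=0$ and $\sum_l x_l u_{lij}=-u_{ij}$. The first shows that $x$ is a null eigenvector of $D^2u(p)$, so I may take $e_n=p/|p|$ and $\lambda_n=0$; the second then determines all third derivatives carrying an index $n$: at $p$ one has $u_{iin}=-\lambda_i/|p|$, $u_{ijn}=0$ for $i\ne j$, and $u_{nnk}=0$. Splitting both sides of the target inequality into the part involving an index $n$ and the purely tangential part indexed by $i,j,k<n$, and using $w^2=\sum_{i<n}\lambda_i^2$ together with $w_k=w^{-1}\sum_i\lambda_i u_{iik}$, a direct computation shows that the $n$-index terms contribute exactly $3w^2/|p|^2$ on the left, and, because $\tfrac{n+1}{n-1}+\tfrac{2(n-2)}{n-1}=3$, exactly $3w^2/|p|^2$ on the right. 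These cancel, and the inequality collapses to the tangential statement
\[
\sum_{i,j,k<n}u_{ijk}^2\ \ge\ \frac{n+1}{n-1}\,\frac{1}{w^2}\sum_{k<n}\Big(\sum_{i<n}\lambda_i\,u_{iik}\Big)^2 .
\]

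Now set $m=n-1$ and let $T=(u_{ijk})_{i,j,k<n}$, a fully symmetric $3$-tensor on $\R^m$ that is trace-free (from $\triangle u_k=0$ and $u_{nnk}=0$ one gets $\sum_{i<n}u_{iik}=0$), and let $M=\mathrm{diag}(\lambda_1,\dots,\lambda_{n-1})$, symmetric and trace-free. Writing $(M:T)_k=\sum_{i,j<n}M_{ij}u_{ijk}$, the displayed right-hand side is $\tfrac{m+2}{m}\,\|M:T\|^2/\|M\|^2$, so everything reduces to the rotation-invariant bound
\[
\|M:T\|^2\ \le\ \frac{m}{m+2}\,\|M\|^2\,\|T\|^2 .
\]
To prove this I would bound the largest eigenvalue of $P_{kl}=\sum_{i,j}u_{ijk}u_{ijl}$: for a unit eigenvector $v$, the eigenvalue equals $\|A\|^2$ where $A_{ij}=\sum_k u_{ijk}v_k$ is symmetric and trace-free, and the full symmetry of $T$ gives the exact identity $\|A\|^2=\langle T,\mathrm{Sym}(A\otimes v)\rangle$ with $\|\mathrm{Sym}(A\otimes v)\|^2=\tfrac13\|A\|^2+\tfrac23\|Av\|^2$.

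The main obstacle is pinning down the sharp constant $\tfrac{m}{m+2}$ in this last inequality: the symmetrization identity alone only yields factor $1$, and one must additionally control $\|Av\|^2$ using the trace-free constraint (a rank-one symmetric $A$ being impossible once $\mathrm{tr}\,A=0$ forces the eigenvalues of $P$ to spread out). Conceptually the sharp constant is transparent through the correspondence between symmetric trace-free tensors and harmonic polynomials, where $\tfrac{m}{m+2}$ is exactly the ratio of the relevant spherical-harmonic norms; the bound is sharp, with equality for tensors such as the one attached to $x_1^3-\tfrac32 x_1(x_2^2+\cdots+x_m^2)$. I expect this algebraic inequality, the analogue of Simons's estimate for minimal cones, to be the genuinely delicate point, the rest being identities forced by harmonicity and homogeneity.
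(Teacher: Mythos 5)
Your reduction is correct, and it is genuinely different from the paper's argument: the Bochner identity $w\,\triangle w=\sum u_{ijk}^2-|\nabla w|^2$, the bookkeeping of all third derivatives carrying the radial index (each side of the target inequality picks up exactly $3w^2/|x|^2$, since $\tfrac{n+1}{n-1}+\tfrac{2(n-2)}{n-1}=3$, and these cancel), and the resulting purely tangential statement $\|M:T\|^2\le\tfrac{m}{m+2}\|M\|^2\|T\|^2$ for trace-free symmetric $M$ and $T$ with $m=n-1$ all check out. The paper never isolates such a sharp tensor inequality: it argues per index $k$ with crude Cauchy--Schwarz ($w_k^2\le\sum_i u_{iik}^2$), the counting bound $\sum_{i\ne j,k}u_{ijk}^2\ge 2\sum_{i\ne k}u_{iik}^2$, and, for tangential $k$, the harmonicity relation $\sum_{i\ne k}u_{iik}=-u_{kkk}$ to get $\sum_{i\ne k}u_{iik}^2\ge\tfrac1{n-1}w_k^2$; the constants $\tfrac{2}{n-1}$ and $\tfrac{2(n-2)}{n-1}$ fall out of this splitting with no extremal problem ever solved. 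However, your proof has a genuine gap exactly where you flag one: the inequality with constant $\tfrac{m}{m+2}$ is asserted, not proved, and the two tools you name do not suffice as stated. From $\|A\|^2=\langle T,\mathrm{Sym}(A\otimes v)\rangle$, Cauchy--Schwarz with $\|\mathrm{Sym}(A\otimes v)\|^2=\tfrac13\|A\|^2+\tfrac23\|Av\|^2$ together with the spectral spread bound $\|Av\|^2\le\tfrac{m-1}{m}\|A\|^2$ (valid for symmetric trace-free $A$) yields only $\lambda_{\max}(P)\le\bigl(1-\tfrac{2}{3m}\bigr)\|T\|^2$, and $1-\tfrac{2}{3m}>\tfrac{m}{m+2}$ for every $m\ge2$, so the constant falls short in all relevant dimensions.

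The missing idea is that, because $T$ is trace-free, you may replace $\mathrm{Sym}(A\otimes v)$ by its trace-free part before applying Cauchy--Schwarz. Since $\mathrm{tr}\,A=0$, the trace vector of $S:=\mathrm{Sym}(A\otimes v)$ is $\sum_i S_{iik}=\tfrac23(Av)_k$, its pure-trace component is $\mathrm{Sym}(\delta\otimes t)$ with $t=\tfrac{2}{m+2}Av$, and a short computation gives
\begin{equation*}
\|P_0\,\mathrm{Sym}(A\otimes v)\|^2=\tfrac13\|A\|^2+\tfrac{2m}{3(m+2)}\|Av\|^2,
\end{equation*}
where $P_0$ denotes the orthogonal projection onto trace-free symmetric $3$-tensors. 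Now $\|Av\|^2\le\tfrac{m-1}{m}\|A\|^2$ gives $\tfrac13+\tfrac{2(m-1)}{3(m+2)}=\tfrac{m}{m+2}$ exactly, closing the argument; in fact the eigenvector-of-$P$ detour is unnecessary, since $\|M:T\|=\max_{|v|=1}\langle T,\mathrm{Sym}(M\otimes v)\rangle\le\|T\|\,\|P_0\,\mathrm{Sym}(M\otimes v)\|$ applies the same estimate directly to $M$. One small slip in your equality discussion: $x_1^3-\tfrac32x_1(x_2^2+\cdots+x_m^2)$ is harmonic only for $m=3$; the extremal tensor in general is attached to $x_1^3-\tfrac{3}{m-1}x_1(x_2^2+\cdots+x_m^2)$, which saturates every step above (both $\|Av\|^2=\tfrac{m-1}{m}\|A\|^2$ and the Cauchy--Schwarz step), confirming that $\tfrac{m}{m+2}$ is sharp. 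With these repairs your argument is complete and, unlike the paper's, identifies the exact extremal configuration of the tangential inequality.
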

 
 \begin{proof}
 We have 
 $$w \, w_k=\sum_{i,j=1}^n u_{ij}\, u_{ijk}  \quad \mbox{for each $k=1,..,n$},$$
 and 
 \begin{equation}\label{31}
 w \, \triangle w + |\nabla w|^2= \sum_{i,j,k=1}^n 
 (u^2_{ijk} + u_{ij}\,u_{ijkk})
 = \sum_{i,j,k=1}^n  u_{ijk}^2.
 \end{equation}
 Since $u$ is homogeneous of degree one, the radial direction $x/|x|$ is an eigenvector for $D^2u$. We choose a system of coordinates such that $e_1$ points in the radial direction at $x$. Then
 $$u_{1i}=0 \quad \mbox{for each $i=1,..,n$} ,$$
 and since $u_{ij}$ are homogeneous of degree $-1$ we obtain
 $$u_{1ij}=-\frac{u_{ij}}{|x|}, \quad u_{11i}=0 .$$
 Choosing the remaining coordinates $e_j$, $j\ge2$ so that
$D^2u$ is diagonal at $x$, we have 
\[
w^2=\sum_{i=1}^n u_{ii}^2, \quad w_k=\sum_{i=1}^n \frac{u_{ii}}{w} \, u_{iik}.
\]
Thus by the Cauchy-Schwarz inequality, for each $k$,
$$ w_k^2 \le \sum_i \left (\frac{u_{ii}}{w}\right)^2 \sum_i u_{iik}^2=\sum_i  u_{iik}^2 .$$
 Then
 \begin{equation}\label{32}
 \sum_{i,j,k} u_{ijk}^2
 =\sum_{i,k} u_{iik}^2 +\sum_{i \ne j, k} u_{ijk}^2 \ge |\nabla w|^2 + 2 \sum_{i \ne k}u_{iik}^2.
 \end{equation}
 
 Next we estimate for each $k$ the sum in the last term above.
  If $k=1$, then
 \begin{equation}\label{33}
  \sum_{i \ne 1} u_{ii1}^2=\sum_{i \ne 1} \left(\frac{u_{ii}}{|x|}\right)^2=\frac{w^2}{|x|^2}.
 \end{equation}
 If $k \ne 1$, then we use  $\triangle u_k=0$ and $u_{11k}=0$ and obtain
 $$\sum_{i \ne k}u_{iik}=-u_{kkk} \quad \Longrightarrow  \quad (n-2)\sum_{i \ne k} u_{iik}^2 \ge u_{kkk}^2, \quad \mbox{$k\neq1$, fixed.}$$
Now, substituting
 $$u_{iik}^2=\frac{1}{n-1}u^2_{iik}+\frac{n-2}{n-1}u_{iik}^2, $$
we find that 
 \begin{equation}\label{34}
 \sum_{i \ne k} u_{iik}^2 \ge \frac{1}{n-1}\sum_{i \ne k}u^2_{iik} + \frac{1}{n-1}u^2_{kkk} = \frac{1}{n-1} \sum_i u^2_{iik} \ge \frac{1}{n-1} w_k^2, \quad \mbox{$k\neq1$, fixed.}
 \end{equation}

Using \eqref{32}-\eqref{34} in \eqref{31}, we have
\[
w\triangle w = \sum_{i,j,k} u_{ijk}^2 - |\nabla w|^2 
\ge 2 \sum_{i\neq k} u_{iik}^2 \ge 2\frac{w^2}{|x|^2} + \frac1{n-1} \sum_{k=2}^n w_k^2
\]
and remarking that $$w_1=-\frac{w}{|x|},$$ since $w$ is homogeneous of degree $-1$,
the inequality of Proposition \ref{SI} is established. 
 \end{proof}
 
 \begin{cor} \label{CSI} The function $\bar v=w^\alpha$, which is homogeneous of degree $-\alpha$, satisfies
 \begin{equation}\label{35}
 \triangle \bar v \ge \alpha(\alpha +1) \frac{\bar v}{|x|^2}  \quad \quad \mbox{for all} \quad  \alpha \ge 1-\frac{2}{n-1},
 \end{equation}
 \end{cor}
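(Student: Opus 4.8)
The plan is to differentiate $\bar v = w^\alpha$ directly and then feed the interior inequality of Proposition \ref{SI} into the resulting expression, using the homogeneity of $w$ to collapse the constants. First I would compute, in the open set $\{w>0\}$ where $w^\alpha$ is smooth,
\[
\triangle \bar v = \alpha \, w^{\alpha-2}\Big[(\alpha-1)\,|\nabla w|^2 + w\,\triangle w\Big].
\]
Substituting the lower bound $w\,\triangle w \ge \frac{2}{n-1}|\nabla w|^2 + 2\,\frac{n-2}{n-1}\,\frac{w^2}{|x|^2}$ from Proposition \ref{SI} gives
\[
\triangle \bar v \ge \alpha\, w^{\alpha-2}\left[\left(\alpha-1+\frac{2}{n-1}\right)|\nabla w|^2 + 2\,\frac{n-2}{n-1}\,\frac{w^2}{|x|^2}\right].
\]

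Next I would handle the gradient term by sign analysis. The coefficient $\alpha - 1 + \frac{2}{n-1}$ is nonnegative precisely when $\alpha \ge 1 - \frac{2}{n-1}$, which is exactly the hypothesis; moreover for $n\ge 3$ this threshold is itself nonnegative, so $\alpha\ge 0$ and the prefactor $\alpha\, w^{\alpha-2}$ is nonnegative on $\{w>0\}$. To eliminate $|\nabla w|^2$ I would invoke the homogeneity of $w$: since $w$ is homogeneous of degree $-1$, its radial derivative equals $-w/|x|$ (this is the fact $w_1 = -w/|x|$ recorded at the end of the proof of Proposition \ref{SI}), whence $|\nabla w|^2 \ge w^2/|x|^2$. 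Because the coefficient of $|\nabla w|^2$ is nonnegative, I may replace $|\nabla w|^2$ by the smaller quantity $w^2/|x|^2$ while preserving the inequality.

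This reduces everything to a single constant multiple of $w^2/|x|^2$, and the point is that the constants collapse exactly:
\[
\left(\alpha-1+\frac{2}{n-1}\right) + \frac{2(n-2)}{n-1} = \alpha - 1 + \frac{2(n-1)}{n-1} = \alpha + 1.
\]
Hence $\triangle \bar v \ge \alpha(\alpha+1)\, w^\alpha/|x|^2 = \alpha(\alpha+1)\,\bar v/|x|^2$, which is \eqref{35}. The main obstacle is purely bookkeeping with two sign conditions that must both point the right way—the coefficient of $|\nabla w|^2$ being nonnegative (which forces the lower endpoint $\alpha = 1-\frac{2}{n-1}$) and $\alpha\ge 0$ (so that multiplying through by $\alpha w^{\alpha-2}$ does not reverse the inequality). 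The clean cancellation to $\alpha+1$ is what makes the homogeneity bound $|\nabla w|^2\ge w^2/|x|^2$ exactly sharp for the purpose at hand.
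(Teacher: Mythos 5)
Your computation is correct and is in substance the same as the paper's proof: the paper phrases everything through $\log w$ (bounding $\triangle(\alpha\log w)+|\nabla(\alpha\log w)|^2$), but dividing your identity $\triangle \bar v = \alpha w^{\alpha-2}\left[(\alpha-1)|\nabla w|^2 + w\,\triangle w\right]$ by $\bar v$ yields exactly that expression, and both arguments then invoke Proposition \ref{SI}, the sign condition $\alpha-1+\tfrac{2}{n-1}\ge 0$ (equivalently $\alpha\ge 1-\tfrac{2}{n-1}$), the homogeneity bound $|\nabla w|^2\ge w_1^2=w^2/|x|^2$, and the same collapse of constants to $\alpha+1$.

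There is, however, one genuine omission: you prove \eqref{35} only on the open set $\{w>0\}$, whereas the corollary must serve (via Proposition \ref{p1}) as a differential inequality on all of $\Omega$, and the set $\{w=0\}\cap\Omega$ need not be empty when $u$ is not one-dimensional. The paper closes this with a final sentence that your proposal lacks: at points of $\{\bar v=0\}\cap\Omega$, the inequality $\triangle \bar v\ge 0=\alpha(\alpha+1)\bar v/|x|^2$ holds in the viscosity sense, since $\bar v\ge 0$ attains its minimum value $0$ there, so any smooth function touching $\bar v$ from above has a local minimum at such a point and hence nonnegative Laplacian. You should add this (or an equivalent device) so that the subsolution property feeding into the instability criterion holds across the zero set of $\|D^2u\|$, not just away from it. (Both your argument and the paper's also implicitly use $n\ge 3$, so that the threshold $1-\tfrac{2}{n-1}$ is nonnegative and multiplying by $\alpha$ preserves inequalities; this is harmless in context.)
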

 \begin{proof}    The conclusion of Proposition \ref{SI}
 can be written as
 $$\triangle (\log w) \ge \left(\frac{2}{n-1}-1  \right)|\nabla (\log w)|^2+ 2 \frac{n-2}{n-1} \frac{1}{|x|^2},$$
 or
 $$\triangle (\alpha \log w) + |\nabla (\alpha \log w)|^2 \ge \alpha \left(\frac{2}{n-1}-1 +\alpha \right)|\nabla (\log w)|^2+ 2 \alpha \frac{n-2}{n-1} \frac{1}{|x|^2}.$$
Using  $$|\nabla (\log w)| \ge \frac{w_1^2}{w^2}=\frac{1}{|x|^2},$$
and $\alpha \ge 1 - 2/(n-1)$, we see that 
 \begin{align*}
  \triangle (\alpha \log w) + |\nabla (\alpha \log w)|^2 & \ge \alpha \left(\frac{2}{n-1}-1 +\alpha + 2 \frac{n-2}{n-1}\right) \frac{1}{|x|^2} \\
  & = \alpha(\alpha+1) \frac{1}{|x|^2}.
  \end{align*}
It follows that \eqref{35} holds on the set $\bar v>0$.  On the other
hand, at points of $\{\bar v = 0\}\cap \Om$, $\Delta \bar v\ge0$ holds
in the sense of viscosity since  $\bar v\ge0$.
\end{proof}

  \subsection{ The boundary inequality} 
  We have $$w^2=\sum_{i,j} u_{ij}^2  \quad \Longrightarrow \quad w \, w_n=
  \sum_{i,j} u_{ij} \, u_{ijn} .$$
  
  Fix a point $x_0$ on $\p \Om \setminus \{0\}$ and choose a system of coordinates as in Section 2, i.e. such that $e_n=\nu_{x_0}$, $D^2 u(x_0)$ is diagonal, and $e_1$ coincides with the radial direction $x_0/|x_0|$. We recall \eqref{2}
   \begin{align*}
  u_{iin}&= u_{nn} u_{ii} -u_{ii}^2  \quad \quad \quad \quad \mbox{for all $i<n$,} \\
  u_{nnn}&= w^2. 
  \end{align*}
Thus, using $u_{nn}=-H$,
\begin{align*}
w\, w_n  & = u_{nn}w^2 + \sum_{i<n} (u_{nn}u_{ii}^2 - u_{ii}^3) \\
& = u_{nn} w^2 + \sum_{k=1}^n (u_{nn}u_{kk}^2 - u_{kk}^3) \\
& =  -2 H w^2 -\sum_{k=1}^n u^3_{kk}.
\end{align*}
Therefore,
\[
\frac 1 H (\log w)_n = -\left(2+ \frac{1} {H w^2}  \sum_{k=1}^n u_{kk}^3 \right) 
\ge - L
\]
where 
   \begin{equation}\label{L}
   L:=\max _{\p \Om_S} 
   \left(2+ \frac{1} {H w^2}  \sum_{k=1}^n u_{kk}^3 \right) 
   \end{equation}
   and we see that the function $\bar v= w^\alpha$ satisfies
   \begin{equation}\label{36}
   \frac 1 H (\log \bar v)_\nu \ge -1 \quad \quad \mbox{if} \quad \alpha \le \frac 1 L.
   \end{equation}
   
   From \eqref{35}, \eqref{36} and Proposition \ref{p1} we see that $u$ is unstable  if there exists $\alpha$ such that
   \[
      \alpha \ge 1 -\frac {2}{n-1}, \quad \quad \alpha \le \frac 1 L,\quad
      \mbox{and} \qquad 
\alpha(\alpha +1) > \left( \frac n 2 -1-\alpha \right)^2.
\]
Notice that the second lower bound on $\alpha$ guarantees the first lower bound 
since the second lower bound is equivalent to 
\[
\alpha > \frac {(n- 2)^2}{4(n-1)} \ge 1-\frac{2}{n-1}.
\]
  
   We summarize these results in the next proposition.
     \begin{prop}\label{p2}
   Let $u$ be a solution to \eqref{op} which is not one-dimensional. Then $u$ is unstable if 
   \begin{equation}\label{37}
   \frac {(n- 2)^2}{4(n-1)}  < \frac 1 L,
   \end{equation}
   with $L$ given by \eqref{L}.  Moreover, $u$ is unstable also in case of equality in \eqref{37} provided that equality does not hold at all points in  \eqref{35}, \eqref{36} .
   \end{prop}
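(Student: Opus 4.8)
The plan is simply to assemble the pieces already in place --- the interior inequality \eqref{35}, the boundary inequality \eqref{36}, and Proposition \ref{p1} --- choosing the exponent $\alpha$ so that all hypotheses hold at once. Take $\bar v = w^\alpha$, homogeneous of degree $-\mu=-\alpha$. By Corollary \ref{CSI} and \eqref{36} this is a subsolution of \eqref{si2} with $\gamma = \alpha(\alpha+1)$ provided $\alpha \ge 1 - 2/(n-1)$ and $\alpha \le 1/L$. The last hypothesis \eqref{gi} reads $\alpha(\alpha+1) \ge (\tfrac n2 - 1 - \alpha)^2$, and expanding the square and cancelling $\alpha^2$ turns it into $\alpha \ge \frac{(n-2)^2}{4(n-1)}$; since $(n-4)^2 \ge 0$ this lower bound already dominates $1 - 2/(n-1)$, so the three constraints collapse to the chain $\frac{(n-2)^2}{4(n-1)} \le \alpha \le 1/L$. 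This is feasible precisely when $\frac{(n-2)^2}{4(n-1)} \le 1/L$, with \eqref{37} leaving room to take $\alpha$ strictly above the lower endpoint and the borderline equality forcing $\alpha = 1/L = \frac{(n-2)^2}{4(n-1)}$.

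To treat both cases I would phrase everything through the spherical threshold $\lambda := \gamma + \mu(n-2-\mu) = \alpha(n-1)$ appearing in the proof of Proposition \ref{p1}, for which \eqref{gi} is exactly $\lambda \ge (n-2)^2/4$. For the main assertion, assume \eqref{37} strict and pick $\alpha$ with $\frac{(n-2)^2}{4(n-1)} < \alpha \le 1/L$; then $\bar v = w^\alpha$ is a (possibly non-strict) subsolution of \eqref{si2}, while strictness of \eqref{gi} gives $\lambda = \alpha(n-1) > (n-2)^2/4$. Using the restriction $\f$ of $w^\alpha$ to $\Om_S$ as test function in the integration by parts from the proof of Proposition \ref{p0.5} yields $\Lambda \ge \lambda$ for the principal eigenvalue $\Lambda$ of \eqref{inf}; together with $\lambda > (n-2)^2/4$ this gives $\Lambda > (n-2)^2/4$, which is all the oscillating-ODE construction of Proposition \ref{p0.5} requires, so $u$ is unstable.

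For the borderline clause, equality in \eqref{37} forces $\alpha = \frac{(n-2)^2}{4(n-1)} = 1/L$ and hence $\lambda = (n-2)^2/4$ exactly. Now the strict inequality $\Lambda > \lambda$ can no longer be supplied by $\lambda$ and must come from the subsolution itself: if \eqref{35} or \eqref{36} is strict on a set of positive measure, the corresponding interior or boundary term in the same integration by parts is strict, again yielding $\Lambda > (n-2)^2/4$ and instability.

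The one genuine subtlety --- and the step I expect to be the main obstacle --- is precisely this strictness accounting. In the borderline case the hypotheses of Proposition \ref{p1} are not literally satisfied, since $w^\alpha$ need not be a \emph{strict} subsolution of \eqref{si2}; what actually drives the argument is the more flexible fact, read off from the proof of Proposition \ref{p0.5}, that $\Lambda > (n-2)^2/4$ --- rather than strict subsolutionality per se --- is what makes the radial ODE oscillate. A minor technical point to dispatch is that $w^\alpha$ may vanish and be non-smooth where $D^2u = 0$; this is harmless, since $w \ge H > 0$ on $\p\Om\setminus\{0\}$ for a non-planar cone, so $\f > 0$ on $\p\Om_S$, and the interior inequality holds in the viscosity, hence distributional, sense by Corollary \ref{CSI}.
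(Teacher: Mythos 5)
Your proposal is correct and takes essentially the same approach as the paper: there, Proposition \ref{p2} is stated as a summary of the immediately preceding computation, which combines \eqref{35}, \eqref{36} and Proposition \ref{p1}, imposes $\alpha(\alpha+1)>\left(\frac{n}{2}-1-\alpha\right)^2$, i.e.\ $\alpha>\frac{(n-2)^2}{4(n-1)}$, and notes that this lower bound dominates $\alpha\ge 1-\frac{2}{n-1}$. Your explicit strictness bookkeeping --- observing that in the borderline case one cannot invoke Proposition \ref{p1} verbatim but must return to the proof of Proposition \ref{p0.5}, where $\Lambda>(n-2)^2/4$ (supplied either by $\lambda>(n-2)^2/4$ or by strictness somewhere in \eqref{35}, \eqref{36}) is what makes the radial ODE oscillate --- is precisely the reading the paper intends but leaves implicit.
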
 
   \begin{cor}
   If $u$ is a stable solution  to \eqref{op} in dimension $n = 3$ then $u$ is one-dimensional. 
   \end{cor}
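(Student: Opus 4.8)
The plan is to apply Proposition \ref{p2} and show that its hypothesis \eqref{37} is automatically satisfied in dimension $n=3$ for any solution that is not one-dimensional. Arguing by contraposition, I assume $u$ is a solution to \eqref{op} which is not one-dimensional, so that $\Om$ is a cone different from a half-space and hence $H>0$ as established in Section \ref{s2}. Since for $n=3$ the left-hand side of \eqref{37} equals $(n-2)^2/(4(n-1)) = 1/8$, it then suffices to verify that $L<8$.

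The key step is to evaluate the quantity $L$ from \eqref{L} explicitly. Fix a boundary point $x_0\in \p\Om\setminus\{0\}$ and adopt the coordinates of the boundary inequality subsection: $e_n=\nu_{x_0}$, the frame diagonalizes $D^2u(x_0)$, and $e_1$ is the radial direction $x_0/|x_0|$. Because $u$ is homogeneous of degree one, the radial direction lies in the kernel of the Hessian, so $u_{11}=0$. In dimension $n=3$ the remaining indices are $2$ and $3=n$, with $u_{33}=u_{nn}=-H$. The harmonicity $\triangle u = u_{11}+u_{22}+u_{33}=0$ together with $u_{11}=0$ then forces $u_{22}=H$.

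Consequently the three eigenvalues of $D^2u(x_0)$ are $0$, $H$, and $-H$, so that
$$\sum_{k=1}^3 u_{kk}^3 = 0 + H^3 + (-H)^3 = 0,$$
and the integrand defining $L$ reduces to the constant value $2$ at every boundary point. Hence $L=2$. Since $1/L = 1/2 > 1/8 = (n-2)^2/(4(n-1))$, the strict inequality \eqref{37} holds, and Proposition \ref{p2} shows that $u$ is unstable. By contraposition, every stable solution in dimension $n=3$ must be one-dimensional.

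I expect essentially no analytic obstacle here: the corollary is a direct evaluation of Proposition \ref{p2}. The only point deserving care is the rigidity of the eigenvalue structure at the boundary, namely that the one free tangential eigenvalue is forced to equal $H$, so that the cubic sum cancels exactly. This is special to $n=3$, where harmonicity plus the vanishing radial eigenvalue leaves only a single tangential direction in the cross-section. In higher dimensions this cancellation fails and $L$ need not be small, which is precisely why Section \ref{s4} must modify the function $w=\|D^2u\|$.
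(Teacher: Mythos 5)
Your proof is correct and follows exactly the paper's argument: the paper likewise observes that in the chosen coordinates $u_{11}=0$ and $u_{22}=-u_{33}$, so the cubic sum in \eqref{L} vanishes, giving $L=2$ while the left side of \eqref{37} is $1/8$. You have merely spelled out the eigenvalue bookkeeping ($0$, $H$, $-H$) in more detail than the paper's one-line proof.
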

   \begin{proof}  When $n=3$ the left side of \eqref{37} is $1/8$, while
   $L=2$ since in our coordinate system $u_{11}=0$ and $u_{22} = -u_{33}$. 
      \end{proof}   

Unfortunately \eqref{37} need not be true in dimensions $4 \le n\le 6$.   
To see this we express $L$ at $x_0 \in \p \Om$ in terms
of the relative sizes of the $n-2$ nonvanishing curvatures of $\p \Om$ 
at that point. Let $\kappa_\ell$, 
   $\ell=2,..,n-1,$ denote the curvatures of $\p \Omega$ with respect to the outer normal, ($\kappa_1=0$ since $e_1$ is the radial direction).   Define 
   \[
   \mu_\ell:=\frac{\kappa_\ell}{H} \implies u_{\ell \ell}  = \mu_\ell H, \quad u_{nn} = -H
   \]
Recall that $H = \sum_{\ell < n} \kappa_\ell >0$.  Since $\triangle u = 0$,
\[
\sum_{\ell  =2}^{n-1}  \mu_\ell=1
\]
Thus an upper bound for $L$ in \eqref{L} is given by 
   \begin{equation}\label{38}
L\le L^*  : = \sup \{2+ \frac{\sum  \mu_\ell^3-1}{1+ \sum \mu_\ell^2}:
\sum \mu_\ell=1\}   \qquad \mbox{(sums from $\ell = 2,\dots, n-1$).}
   \end{equation}
It is not hard to show that when $n=4$, $L^* = 7/2$,
whereas the left hand side in \eqref{37} is $1/3$.  
Moreover, if $n \ge 5$, then $L^*  = \infty$.
   
We remark however that condition \eqref{37} gives the sharp result in
the case when all curvatures are equal, i.e. the axis symmetric
case. Then $L = L^* =(n-1)/(n-2)$ and \eqref{37} holds for $n\le 5$. When
$n=6$ we have equality in \eqref{37}, but in this case, the
equality in \eqref{35} is strict.  Indeed, choosing
$$\alpha=\frac 1L = \frac45 > \frac35 = 1-\frac{2}{n-1},$$ 
the computation in the proof of Corollary \ref{CSI}  shows that
\[
\triangle (\alpha \log w) + |\nabla (\alpha \log w)|^2 \ge \alpha (\alpha+1) \frac{1}{|x|^2} +  \alpha \left(\frac{2}{n-1}-1 +\alpha\right) \frac{w^2_n}{w^2}.
\]
The last term is positive on $\p \Om$ because $w_n/w = -HL <0$. 

Finally we point out the main difference with the minimal surface
theory.  Proposition \ref{p2} requires an exponent $\alpha$ 
satisfying
\[
(n-2)^2/4(n-1) \le \alpha \le 1/L
\]
The lower bound is essentially maximized when $D^2u$ has only one 
negative eigenvalue and 
the remaining ones are positive and equal (as in the axis symmetric
case). On the other hand, the upper bound is minimized 
(that is, $L \to \infty$, $n \ge 5$) when, on the 
boundary $\p \Om$, one tangential eigenvalue is positive and the
remaining ones are negative. In other words, the constraints on 
$\alpha$ that come from the interior inequality and boundary
inequalities are individually nearly optimal but they are 
achieved for different configurations. This is
one way to understand why \eqref{37} is not sufficient to
prove instability
in the conjectured optimal range, that is, for $n \le 6$.

 Our computation is somewhat consistent with the findings of G. Hong
 in \cite{H} where he studied the stability of Lawson-type cones for
 \eqref{op} in low dimensions. It turns out that in dimension $n=7$
 there are in fact two different stable cones corresponding precisely
 to the two situations described above.
 
   \section{The case $w^2=\sum_{\lambda_k>0} \lambda_k^2 + a \sum_{\lambda_k<0} \lambda_k^2$.}\label{s4}
   
   In this section we proceed as in Section 3 for a different choice of $w$ i.e.
   \begin{equation}\label{w}
   w^2:=\sum_{\lambda_k>0} \lambda_k^2 + a \sum_{\lambda_k<0} \lambda_k^2,
   \end{equation}
   for some constant $a >0$. Here $\lambda_i$ represent the eigenvalues of $D^2u$. 
   
When $a=1$ then $w$ coincides with the function considered in
Section 3. We show that when $a=4$ and $n=4$, the interior
inequality remains the same as in Section 3, however the boundary
inequality improves from $L\le 7/2$ to $L \le 3$ and allows us to prove Theorem
\ref{d4}.

   \subsection{Functions of the eigenvalues}
   
Assume $$F(D^2u)=f(\lambda_1,..,\lambda_n),$$
   with $f \in C^1$ a symmetric function of its arguments. We choose a system of coordinates at a point $x \in \Om$ such that 
   $$D^2 u=diag(\lambda_1,..,\lambda_n),$$ and we use the following orthonormal basis in the space of symmetric matrices
   $$e_{ii}:=e_i \otimes e_i, \quad \quad e_{ij}:=\frac{1}{\sqrt 2} (e_i \otimes e_j + e_j \otimes e_i) \quad \mbox{for $i<j$}.$$
   Then one can check that
   \begin{equation}\label{41}
   F_{e_{ii}}(D^2 u)=f_{\lambda_i} \quad \mbox{and} \quad F_{e_{ij}}(D^2u)=0.
   \end{equation}
   Moreover, if $f \in C^2$ then
   $$F_{e_{ii},e_{kk}}(D^2u)=f_{\lambda_i \lambda_k} ,$$
  
   $$F_{e_{ij},e_{ij}}(D^2u)=\left\{
   \begin{array} {l} 
    \,  \frac{f_{\lambda_i}-f_{\lambda_j}}{\lambda_i-\lambda_j} \quad \quad \mbox{if $\lambda_i \ne \lambda_j$,} \\
   \   \\
   f_{\lambda_i \lambda_i} \quad \quad \quad \quad \mbox{if $\lambda_i=\lambda_j$.}
   \end{array}
   \right.   $$
  
   $$F_{e_{ij},e_{kl}}(D^2u)=0 \quad \mbox{if} \quad e_{ij} \ne e_{kl}, \, i<j.$$
   
   These can be checked from the fact that the eigenvalues of the matrix 
   $$\begin{pmatrix} \lambda_1 & \eps \\ \eps & \lambda_2 \end{pmatrix}  $$
   are $$\lambda_1+ \frac{\eps^2}{\lambda_1-\lambda_2} + O(\eps^3) \quad \mbox{and} \quad  \lambda_2+ \frac{\eps^2}{\lambda_2-\lambda_1} + O(\eps^3) \quad \mbox{if $\lambda_1 \ne \lambda_2$}$$
or $$\lambda_1 + \eps, \quad \lambda_2-\eps \quad \mbox{if $\lambda_1=\lambda_2$} .$$

\

\subsection{The interior inequality}
We show that the function $w$ defined in \eqref{w} satisfies the same differential inequality as in Proposition \ref{SI}. Rather surprisingly we can prove a more general statement: any convex, symmetric, homogeneous of degree one function of the eigenvalues satisfies the same conclusion as  Proposition \ref{SI}.  
   
 \begin{thm}\label{p3}
 Assume $\triangle u =0$ and let $$w=F(D^2u):=f(\lambda_1,..,\lambda_n),$$
 with $f$ a convex, symmetric, homogeneous of degree one function. Then
 $$w  \, \triangle w \ge \frac 2 n \, \,  |\nabla w|^2.$$
  Moreover, if $u$ is homogeneous of degree 1, the inequality can be improved to
 $$w \triangle w \ge \frac{2}{n-1}|\nabla w|^2 + 2 \, \frac{n-2}{n-1} \frac{w^2}{|x|^2} .$$
(The inequalities above are understood in the viscosity sense.)
 \end{thm}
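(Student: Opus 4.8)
The plan is to follow the template of Proposition \ref{SI}, replacing the explicit norm by the abstract function $w=F(D^2u)=f(\lambda_1,\dots,\lambda_n)$ and using only the qualitative consequences of convexity and $1$-homogeneity of $f$. I would work at a fixed interior point in coordinates diagonalizing $D^2u$, so that \eqref{41} and the second–derivative formulas of the preceding subsection apply. One differentiation gives $w_k=\sum_p f_{\lambda_p}u_{ppk}$, and a second differentiation, combined with $\triangle u_{ij}=\partial_{ij}\triangle u=0$, makes the first–order term drop after summing in $k$, leaving
\[
\triangle w=\sum_k\sum_{p,q}f_{\lambda_p\lambda_q}\,u_{ppk}u_{qqk}
+2\sum_k\sum_{p<q}\frac{f_{\lambda_p}-f_{\lambda_q}}{\lambda_p-\lambda_q}\,u_{pqk}^2 .
\]
Convexity of $f$ is equivalent to convexity of $F$, that is, to $(f_{\lambda_p\lambda_q})$ being positive semidefinite and to all difference quotients $c_{pq}:=(f_{\lambda_p}-f_{\lambda_q})/(\lambda_p-\lambda_q)$ being nonnegative; hence both sums are nonnegative, which already gives $\triangle w\ge0$. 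The point is to extract the sharp quantitative factor.

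Next I would reduce matters to a family of pointwise algebraic inequalities. Harmonicity gives $\sum_p u_{ppk}=0$, so I can rewrite $w_k=\sum_{p\ne k}(f_{\lambda_p}-f_{\lambda_k})u_{ppk}$, the analogue of the relation $\sum_{i\ne k}u_{iik}=-u_{kkk}$ used in \eqref{34}. The crucial bookkeeping observation is that the components with exactly two equal indices, $u_{ppk}$ with $p\ne k$, decouple across the value of the single index $k$, while the components with three distinct indices appear only in the manifestly nonnegative part of the second sum and may be discarded. Thus, writing $b_p=u_{ppk}$ for fixed $k$, the desired inequality $w\triangle w\ge\frac2n|\nabla w|^2$ follows once one shows, for every $k$ and every vector $b$ with $\sum_p b_p=0$,
\[
w\sum_{p,q}f_{\lambda_p\lambda_q}b_pb_q+2w\sum_{p\ne k}c_{pk}b_p^2
\ \ge\ \frac2n\Big(\sum_{p\ne k}(f_{\lambda_p}-f_{\lambda_k})b_p\Big)^2 .
\]

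This per–index inequality is the main obstacle. I would prove it by a weighted Cauchy–Schwarz: estimating the right–hand side against the off–diagonal term with the weights $c_{pk}$ produces the sum $\sum_{p\ne k}(f_{\lambda_p}-f_{\lambda_k})(\lambda_p-\lambda_k)$, which the Euler identities $\sum_p\lambda_p f_{\lambda_p}=w$, $\sum_q\lambda_q f_{\lambda_p\lambda_q}=0$ and the harmonic relation $\sum_p\lambda_p=0$ evaluate explicitly; the Hessian term $(f_{\lambda_p\lambda_q})$, which by the first Euler relation annihilates the direction $\lambda$, supplies exactly the part of $w_k$ that the off–diagonal term cannot control. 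The delicacy is that convexity is available only qualitatively, so one must show that the combination of the two nonnegative pieces forces the constant $\frac2n$ for every admissible $f$; the computation is transparent for $f=\|\cdot\|$, where it reduces to the two bounds $(\lambda\cdot b)^2\le w^2|b|^2$ and $b_k^2\le\frac{n-1}n|b|^2$ (the latter using $\sum_p b_p=0$), and the general case is where convexity is essentially used.

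Finally, for the improved inequality when $u$ is homogeneous of degree one, I would take $e_1=x/|x|$ as in Proposition \ref{SI}, so that $\lambda_1=0$, $u_{ij1}=-u_{ij}/|x|$, $u_{11i}=0$, and $w$ is homogeneous of degree $-1$ with $w_1=-w/|x|$. The index $k=1$ is now frozen: $b_p=u_{pp1}=-\lambda_p/|x|$, the Hessian term vanishes by Euler, and the off–diagonal term telescopes to $2w/|x|^2$, so the $k=1$ contribution equals $2w^2/|x|^2$, which I would split as $\frac2{n-1}w_1^2+2\frac{n-2}{n-1}\frac{w^2}{|x|^2}$. For $k\ge2$ the forced vanishing $u_{11k}=0$ removes one variable, so the per–index inequality of the previous paragraph holds with $n$ replaced by $n-1$; summing over $k$ yields $w\triangle w\ge\frac2{n-1}|\nabla w|^2+2\frac{n-2}{n-1}\frac{w^2}{|x|^2}$. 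The extension of all inequalities to $\{w=0\}$ in the viscosity sense follows from $w\ge0$, exactly as in Corollary \ref{CSI}.
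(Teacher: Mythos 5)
Your outline reproduces the paper's architecture faithfully: diagonalizing coordinates, the second-derivative expansion of $\triangle w$, discarding the terms $u_{pqk}$ with three distinct indices, the rewriting $w_k=\sum_{p\ne k}(f_{\lambda_p}-f_{\lambda_k})u_{ppk}$, the weighted Cauchy--Schwarz with weights $c_{pk}$, and the $k=1$ versus $k\ge 2$ split in the homogeneous case (including the telescoping of the $k=1$ contribution to $2w^2/|x|^2$). But the per-index inequality that you yourself call ``the main obstacle'' is exactly where your argument fails to close, and the mechanism you propose for it is wrong. After Cauchy--Schwarz, what must be shown is the paper's \eqref{45},
\[
\sum_{p\ne k}(f_{\lambda_p}-f_{\lambda_k})(\lambda_p-\lambda_k)\ \le\ n f \qquad \mbox{for each fixed $k$.}
\]
You assert that the Euler identities together with $\sum_p\lambda_p=0$ ``evaluate this explicitly''; they do not. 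For fixed $k$ they give
\[
\sum_{p\ne k}(f_{\lambda_p}-f_{\lambda_k})(\lambda_p-\lambda_k)= f + n\lambda_k f_{\lambda_k}-\lambda_k\sum_p f_{\lambda_p},
\]
a $k$-dependent expression that is not manifestly $\le nf$ (bounding it requires an inequality, not an identity). What the identities do evaluate is the \emph{full} pairwise sum, the paper's \eqref{46}:
\[
\sum_{p<q}(f_{\lambda_p}-f_{\lambda_q})(\lambda_p-\lambda_q)=\sum_p f_{\lambda_p}\sum_{q\ne p}(\lambda_p-\lambda_q)= n\sum_p\lambda_p f_{\lambda_p}=nf .
\]
The missing idea is the comparison between the two: since every pairwise term $(f_{\lambda_p}-f_{\lambda_q})(\lambda_p-\lambda_q)$ is nonnegative --- the paper's \eqref{fij}, proved by the symmetry/convexity swap $0=f(Z_1)-f(Z_0)\ge \nabla f(Z_0)\cdot(Z_1-Z_0)$ --- the sum over pairs containing $k$ is bounded by the sum over all pairs, hence by $nf$. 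This one-line comparison is the heart of the theorem, and it never appears in your proposal.

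Your fallback, that the Hessian term $\sum_{p,q}f_{\lambda_p\lambda_q}b_pb_q$ ``supplies exactly the part of $w_k$ that the off-diagonal term cannot control,'' is asserted without any supporting computation and is also a red herring: in the paper the Hessian term is simply discarded (it is nonnegative by convexity), and the off-diagonal term alone yields \eqref{44} once \eqref{45} is in place. Your closing remark that ``the general case is where convexity is essentially used'' concedes that the argument is complete only for $f=\|\cdot\|$, i.e.\ only for Proposition \ref{SI}, which the paper already proved in Section 3 by a different, more explicit computation. The homogeneous refinement and the viscosity extension on $\{w=0\}$ in your last paragraph are correct and match the paper, but for $k\ge 2$ they invoke the per-index inequality in $n-1$ variables, so they inherit the same gap.
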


   We remark that the hypotheses on $f$ easily imply $f \ge 0$. 
  Notice that the first inequality is equivalent to $w^{1-\frac 2n}$ is subharmonic, or in the case $n=2$ that $\log w$ is subharmonic.
  
  \begin{proof}
  
  We assume that $f$ is smooth in $\R^n \setminus \{0\}$. Then the general case easily follows by approximation. Also, it suffices to show the inequality in the set $\{w>0\}$ since it is obvious in $\{w=0\}$.
  
  Fix a point $x$ with $D^2 u (x) \ne 0$, and we choose a system of coordinates at $x$ such that $$D^2u=diag(\lambda_1,..,\lambda_n).$$ 
  First we show that
  \begin{equation}\label{fij}
  (f_{\lambda_i}-f_{\lambda_j})(\lambda_i-\lambda_j) \ge 0,
  \end{equation}
  and the inequality is strict if $f$ is strictly convex and $\lambda_i \ne \lambda_j$. 
  
  Indeed, let $Z_0:=(\lambda_1,..,\lambda_n)$ and let $Z_1$ denote the vector obtained from $Z_0$ after interchanging $\lambda_i$ with $\lambda_j$. Using the symmetry and convexity of $f$ we obtain
  $$0=f(Z_1)-f(Z_0) \ge \nabla f (Z_0) \cdot (Z_1-Z_0),$$
  and this gives our claim \eqref{fij}.  
  
  In view of Section 4.1, for each $k$ we have
 \begin{equation}\label{42}
w_k=f_{\lambda_i} u_{iik},
\end{equation}
and
 \begin{equation*}\label{43}
w_{kk}=\sum_{i} f_{\lambda_i} u_{iikk} + \sum_{i,j} f_{\lambda_i \lambda_j} u_{iik} u_{jjk} + 2 \,\sum_{i<j}  F_{e_{ij},e_{ij}} \,  u_{ijk}^2,
\end{equation*}
Summing over $k$ and using that $f$ is convex and $\triangle u_{ii}=0$ we find
\[
\triangle w \ge 2  \sum_{i<j} \sum_k  F_{e_{ij},e_{ij}} u_{ijk}^2
\]
Notice that all such terms are nonnegative since, by \eqref{fij}, 
$F_{e_{ij},e_{ij}} \ge 0$. We keep only the terms for which either $i=k$ or
$j=k$ and obtain
$$
\triangle w \ge 2  \sum_{i \ne j}F_{e_{ij},e_{ij}} u_{iij}^2,
$$
where $i$, $j$ run over $\{1,..,n\}$ with $i \ne j$. 

In order to obtain our inequality it suffices to show that 
\begin{equation}\label{44}
 \sum_{i \ne k}F_{e_{ik},e_{ik}} u_{iik}^2  \ge \frac 1n \frac{w_k^2}{w} \quad 
\mbox{for each fixed $k$.}
  \end{equation}
>From \eqref{42} and $\triangle u_k=0$ we find 
$$
w_k=\sum_{i \ne k} (f_{\lambda_i}-f_{\lambda_k}) u_{iik} \qquad \mbox{($k$ fixed)}.
$$
Notice that from Section 4.1 and the symmetry of $f$ we have 
$$
f_{\lambda_i}-f_{\lambda_k}= (\lambda_i-\lambda_k) F_{e_{ik},e_{ik}}.
$$ 
Hence by the Cauchy-Schwarz inequality and \eqref{fij} we obtain
$$
w_k^2 \le  \left(\sum_{i \ne k}F_{e_{ik},e_{ik}} u_{iik}^2 \right) \left( \sum_{i \ne k}  (\lambda_i-\lambda_k) (f_{\lambda_i}-f_{\lambda_k}) \right).
$$ 
Thus, in order to prove \eqref{44} it suffices to show that
\begin{equation}\label{45}
\sum_{i \ne k}  (\lambda_i-\lambda_k) (f_{\lambda_i}-f_{\lambda_k})   \le n f
\qquad \mbox{for each fixed $k$}
\end{equation}
  Indeed, using that $\sum \lambda_i=0$, $\sum \lambda_i f_{\lambda_i}=f$, 
and summing over both $i$ and $j$, we obtain the identity
\begin{equation}\label{46}  
  \sum_{i<j} (\lambda_i-\lambda_j) (f_{\lambda_i}-f_{\lambda_j})=\sum_i f_{\lambda_i} \sum_{j \ne i}(\lambda_i-\lambda_j)= \sum_i f_{\lambda_i} n\lambda_i =nf .
  \end{equation}
Our claim \eqref{45} follows since, by \eqref{fij}, 
the left hand side in \eqref{45} is bounded above by the 
left hand side of \eqref{46}.
  
  \
  
{\it Remark:} From the equality above and \eqref{fij} we see that,
if $f$ is strictly convex in a neighborhood of
$Z_0=(\lambda_1,..,\lambda_n)$, we have equality in \eqref{45} only
when all $\lambda_i$ with $i \ne k$ are equal. In other words, the
coefficient of $1/n$ on ${w_k^2}/{w}$ in \eqref{44} can be replaced
by $1/n + \eps$ in a neighborhood of $x$, if $\lambda_i \ne
\lambda_j$ for some $i,j \ne k$ and $f$ is strictly convex near
$Z_0$ in the 2-dimensional plane generated by the $\lambda_i$,
$\lambda_j$ directions. Here $\eps>0$ depends on
$(\lambda_i-\lambda_j) (f_{\lambda_i}-f_{\lambda_j})$.
  
  \
  
  We conclude with the case when $u$ is homogeneous of degree 1 and show that the inequalities above can be improved. We assume that at the point $x$, the $e_1$ direction represents the radial direction $x/|x|$, thus $$\lambda_1=0, \quad u_{ij1}=-\frac{u_{ij}}{|x|}.$$ 

  Let $k \ne 1$. Then, the coefficient of $1/n$ in \eqref{44} 
can be replaced by $1/(n-1)$. Indeed, $u_{11k}=0$, $\lambda_1=0$, 
thus  the index $i=1$ can be ignored in the computations above, 
and we reduce the problem to $n-1$ variables.

When $k=1$ the left hand side of \eqref{45} equals $f$ since
  $$ \sum_{i \ne 1} (\lambda_i-\lambda_1)
(f_{\lambda_i}-f_{\lambda_1})= \sum_{i \ne 1} \lambda_if_{\lambda_i} =
f,$$ where we used $\lambda_1=0$, $\sum \lambda_i=0$. This shows that
the coefficient of $w_1^2/w^2$ in \eqref{44} can be replaced by
1. Since $w$ is homogeneous of degree $-1$ we also have
$w_1=-w/|x|$,  and we obtain
\[
w \Delta w \ge \frac{2}{n-1} \sum_{k=2}^n w_k^2 \ + w_1^2 = 
\frac{2}{n-1}|\nabla w|^2 + 2\frac{n-2}{n-1} \frac{w^2}{|x|^2}.
\]
\end{proof}
  
  \subsection{The boundary inequality} We show that the function $w$ defined in \eqref{w}, when $a=4$, $n=4$ satisfies
  \begin{equation}\label{47}
  \frac{1}{H} (\log w)_\nu \ge -3.
  \end{equation}
  
  Notice that $w \in C^{1,1}$ in the set $\{ w \ne 0\}$.
  
Let $x_0 \in \p \Om \setminus\{0\}$ and we choose a system of
coordinates as before i.e. with $D^2u(x_0)$ diagonal, $e_n=\nu_{x_0}$
and $e_1=x_0/|x_0|$. Denote by $i$ and $s$ the indices for
which $\lambda_i>0$ and $\lambda_s<0$, respectively.
>From \eqref{42}, \eqref{2} and $\lambda_n = -H$, we have
  \begin{align*}
  w_n&  
=\sum_i \frac{\lambda_i}{w} u_{iin} + a  \sum_s \frac{\lambda_{s}}{w}u_{ssn} 
\\
&  = \sum_i \frac{\lambda_i}{w} (-H\lambda_i-\lambda_i^2) + 
a \sum_{s \ne  n} \frac{\lambda_s}{w}(-H\lambda_s-\lambda_s^2) 
+a \frac{\lambda_n}{w}\sum_{k=1}^n \lambda_k^2 \\
& = -\frac{H}{w}
\left(\sum_i \lambda_i^2 + a\sum_{s\ne n} \lambda_s^2 + a \lambda_n^2\right)
- \frac1w 
\left( \sum_i \lambda_i^3 + a\sum_{s\ne n} \lambda_s^3 + a \lambda_n^3\right)
- \frac{aH}{w} \sum_{k=1}^n \lambda_k^2.
  \end{align*}
Multiplying by $-1/Hw$, we write this in more compact form as
\[
-\frac{w_n}{Hw} =
1 + \frac{\sum \lambda_i^3+a \sum \lambda_s^3 + a H \sum \lambda_k^2}{Hw^2}.
\]
where the $k$ is summed over all indices and, as before,
$i$ is summed over indices for which $\lambda_i>0$, and $s$ is
summed over indices for which $\lambda_s <0$. 

Since $\lambda_1=0$ and $\lambda_4<0$, we distinguish two cases depending 
whether $\lambda_2$ and $\lambda_3$ are both positive or have opposite signs.

\

{\it Case 1:} $\lambda_2>0$, $\lambda_3 \le 0$. 

Let $$\mu:=-\frac{\lambda_3}{H} \quad \mbox{thus} \quad \frac{\lambda_2}{H}=\mu +1, \quad \mbox{and $\mu \ge 0$}. $$

We need to show that
$$\frac{(1+\mu)^3-a\mu^3+a((1+\mu)^2+\mu^2)}{(1+\mu)^2+a\mu^2+a} \le 2 .$$
This is equivalent to
$$(\mu-1) \left(a (\mu^2 + \mu -1) -(\mu+1)^2 \right) \ge 0,$$
or, since $a=4$,
$$(\mu-1)^2(3 \mu + 5 ) \ge 0, $$
which is obvious since $\mu \ge 0$.

\

{\it Case 2:}  $\lambda_2>0$, $\lambda_3>0$.

Let $$\mu:=\frac{\lambda_2}{H} \quad \mbox{thus} \quad \frac{\lambda_3}{H}=1-\mu, \quad \mbox{and} \quad \mu\in (0,1). $$
We need to show that
$$\frac{\mu^3+(1-\mu)^3+4(\mu^2+(1-\mu)^2)}{\mu^2+(1-\mu)^2+4} \le 2.$$
This is obvious since the numerator is bounded above by $5$ thus the fraction is bounded by $5/4 <2$.

In conclusion \eqref{47} is proved and equality at a point holds only when 
\begin{equation}\label{48}
 \lambda_2>0, \quad \quad \lambda_3=\lambda_4<0.
 \end{equation}

\

{\it Proof of Theorem \ref{d4}}. Let $n=4$ and set $$\bar v:=w^\frac 13$$ with $w$ as in \eqref{w} and $a=4$. Assume that $w$ is not identically $0$, i.e. $u$ is not  a one-dimensional solution.

By Theorem \ref{p3} and \eqref{47} it follows as in Section 3 that $\bar v$ satisfies \eqref{si2}. In order to prove that $u$ is not stable it remains to show that $\bar v$ is a strict subsolution. 

We fix a point $x_0 \in \p \Om$. If equality holds in \eqref{47} then, by \eqref{48}, $\lambda_2 \ne \lambda_3$ at $x_0$. 
Then, from the remark in the proof of Theorem \ref{p3}, it follows that the differential inequality can be improved by adding a term $\eps w_n^2$ to the right hand side.
 Since $w_n=-3Hw<0$, we find that at $x_0$ we have strict inequality in Theorem \ref{p3} which in turn gives that $\bar v$ is a strict subsolution for the interior problem in a neighborhood of $x_0$.
 
 \qed

\end{document}